\documentclass[12pt,reqno]{amsart}
\usepackage{amsmath,amsthm,amssymb,amsfonts,amscd}
\usepackage{mathrsfs}
\usepackage{bbm}
\usepackage{bbding}
\usepackage{graphicx,latexsym}
\usepackage{hyperref}\hypersetup{colorlinks=true, linkcolor=black}
\usepackage{geometry}
\usepackage{color}
\usepackage{xcolor}
\usepackage{picture,epic}
\usepackage{tikz}

\numberwithin{equation}{section}

\theoremstyle{plain}
\newtheorem{theorem}{Theorem}[section]
\newtheorem{lemma}[theorem]{Lemma}
\newtheorem{corollary}[theorem]{Corollary}
\newtheorem{proposition}[theorem]{Proposition}

\theoremstyle{definition}
\newtheorem{definition}[theorem]{Definition}
\newtheorem{conjecture}[theorem]{Conjecture}

\theoremstyle{remark}
\newtheorem{remark}[theorem]{Remark}

\renewcommand{\Re}{\operatorname{Re}}
\renewcommand{\Im}{\operatorname{Im}}

\newcommand{\ad}{\operatorname{ad}}

\newcommand{\GL}{\operatorname{GL}}
\newcommand{\SL}{\operatorname{SL}}

\renewcommand{\mod}{\operatorname{mod}\ }

\newcommand{\dd}{\mathrm{d}}

\makeatletter
\def\@tocline#1#2#3#4#5#6#7{\relax
	\ifnum #1>\c@tocdepth 
	\else
	\par \addpenalty\@secpenalty\addvspace{#2}%
	\begingroup \hyphenpenalty\@M
	\@ifempty{#4}{%
		\@tempdima\csname r@tocindent\number#1\endcsname\relax
	}{%
		\@tempdima#4\relax
	}%
	\parindent\z@ \leftskip#3\relax \advance\leftskip\@tempdima\relax
	\rightskip\@pnumwidth plus4em \parfillskip-\@pnumwidth
	#5\leavevmode\hskip-\@tempdima
	\ifcase #1
	\or\or \hskip 1em \or \hskip 2em \else \hskip 3em \fi%
	#6\nobreak\relax
	\hfill\hbox to\@pnumwidth{\@tocpagenum{#7}}\par
	\nobreak
	\endgroup
	\fi}
\makeatother

\title{On the $L^2$ restriction norm of large level}
\author{Chengliang Guo}
\address{Mathematical Research Center \\ Shandong University \\ Jinan \\Shandong 250100 \\China}
\email{chengliang.guo@mail.sdu.edu.cn}

\date{\today}
\keywords{Equidistribution in the level aspect, restriction to the vertical geodesic, Holowinsky--Soundararajan's method, moments of $L$-functions}
\thanks{This work was supported by  the National Key R\&D Program of China (No. 2021YFA1000700).}

\begin{document}

	\begin{abstract}In this paper, we prove the mass equidistribution theorem restricted to  vertical geodesic segments for holomorphic Hecke newforms of large square-free level. We utilize the effective proof of QUE in the level aspect. Moreover, we study this $L^2$ mass in the full geodesics and relate it to second moments of twisted $L$-functions.
	\end{abstract}
	
	\maketitle
	

	\section{Introduction}
	In the study of the $L^2$ mass distribution of automorphic forms, the Quantum Unique Ergodicity (QUE) conjecture, famously proposed by Rudnick and Sarnak \cite{RS-QUE}, has seen substantial progress in various settings. The conjecture and its analogues in the weight and level aspects have been established in works such as \cite{Lindenstrauss-QUE,Sound-QUE,Holowinsky-Sound-QUE-holo,Nelson-QUE-level,Nelson-Pitale-Saha-QUE-level,Hu-mass-equidistribution}. Moreover, there are many interesting questions about the restriction of $L^2$ mass to thin sets \cite{GRS-nodal-domains,Young-QUE-thin-set,Humphries-equidistribution-shrinking-sets}.
	
	Although QUE problems in different aspects share similar formulations, they are often approached by very different methods. Moreover, they behave quite differently when the mass is restricted to thin sets such as vertical geodesics.
	
	Let $\mathcal{L}$ be the vertical geodesic connecting $i$ and $\infty$  in the standard fundamental domain $\mathcal{F}$ of $\SL(2,\mathbb{Z})$, where
	\[
	\mathcal{F} = \{x+iy \in \mathbb{H} : |x|\leq 1/2, x^2+y^2 \geq 1\}
	\]
	and $\mathbb{H}$ is the upper half space equipped with the length element $\dd s(z) = \frac{\sqrt{\dd x^2 + \dd y^2}}{y}$ and the area element $\dd \mu z = \frac{\dd x \dd y}{y^2}$. In the spectral aspect, \cite{GRS-nodal-domains} proved a sharp bound
	\[
	1\ll \int_{\mathcal{L}}|\phi(z)|^2\dd s(z) =  \int_{1}^{\infty}\phi^2(iy)\frac{\dd y}{y} \ll t_{\phi}^{\varepsilon}
	\]
	where $\phi$ is an even Hecke--Maass cusp form with spectral parameter $t_{\phi}$. The sharp upper bound is based on the mean value theorem of Dirichlet polynomials. By contrast, in the weight aspect, for a Hecke newform $f$ of weight $k$, the best known upper bound to date is only
	\[
	\int_{0}^{\infty}y^{k}f^2(iy)\frac{\dd y}{y} \ll k^{\frac{1}{4}+\varepsilon}
	\]
	proved in \cite{B-K-Y-fourth-norm}. As mentioned by Young, the sharp upper bound
	\[
	\int_{0}^{\infty}y^{k}f^2(iy)\frac{\dd y}{y} \ll k^{\varepsilon}
	\]
	would imply a strong subconvexity bound
	\[
	L(\frac{1}{2}, f) \ll C(f)^{1/8+\varepsilon}
	\]
	where $C(f) = k^2$ is the analytic conductor of $f$. Such a bound would be stronger than any known subconvexity bound for any $L$-function, even for the Riemann zeta function.
	
	Young also formulated several conjectures which can be viewed as QUE problems restricted to thin sets \cite{Young-QUE-thin-set}. For Eisenstein series restricted to vertical geodesic segments, he proved
	\begin{equation}
		\label{eq:geodesicMassEquidistribution}
		\lim_{T \rightarrow \infty} \frac{1}{\log(\frac{1}{4}+T^2)} \int_0^{\infty}\psi(y) |E(iy,\tfrac{1}{2}+iT)|^2 \frac{dy}{y} = \frac{6}{\pi} \int_0^{\infty} \psi(y) \frac{dy}{y}
	\end{equation}
	where $\psi$ is a smooth, compactly-supported function on $(0,\infty)$.
	In the case of cusp forms, however, the analogous equidistribution problem remains open \cite[Conjecture 1.1]{Young-QUE-thin-set}.

In this paper, we aim to study these restricted norms in the level aspect. We establish the mass equidistribution restricted to vertical geodesic segments for Hecke newforms of large square-free level.

\subsection{$L^2$ mass equidistribution in the compact segments of large level} We define $Y_{0}(q) = \Gamma_0(q)\backslash\mathbb{H} $.	Let $H_{k}(q)$ be the Hecke basis consisting of Hecke eigenforms of weight $k$ and square-free level $q$. Let $H_{k}^{*}(q)$ be the set of Hecke newforms in $H_k(q)$.
Note that $\# H_{k}^{*}(q) \sim \frac{(k-1)q}{12}$  \cite[Corollary 2.14]{L-I-S-low-lying-zeros}. For any $f \in H_{k}(q)$, there is the Fourier expansion
\[
f(z) = a_f(1)\sum_{n\geq 1}\lambda_f(n)n^{\frac{k-1}{2}}e(nz)
\]
where $\lambda_f(n)$ is the $n$-th Hecke eigenvalue with $\lambda_{f}(1) = 1$ and satisfies Deligne's bound $|\lambda_{f}(n)| \leq d(n) = \sum_{d | n}1$.	We assume that the Petersson inner product of $f \in H_{k}(q)$ is normalized by
\[
\langle f, f \rangle_q  = \int_{Y_{0}(q)}y^{k}|f(z)|^2\frac{\dd x\dd y}{y^2} = 1.
\]
In this normalization, we have
\[
|a_f(1)|^2 = \frac{2\pi^2}{q(k-1)L(1,\ad f)}\frac{(4\pi)^{k-1}}{\Gamma(k-1)}.
\] 
Moreover, the automorphic property of $\ad f$ \cite{Gelbart-Jacquet} and the non-existence of Landau--Siegel zero of $L(s,\ad f)$ \cite{Hoffstein-Lockhart} imply
\[
L(1,\ad f) \gg_k \frac{1}{\log q}.
\]

Let $\psi(z) \in C_{c}^{\infty}(Y_{0}(1)) \hookrightarrow C_{c}^{\infty}(Y_{0}(q))$. QUE in the level aspect predicts that
\begin{equation}\label{level_aspect_QUE}
	\int_{Y_{0}(q)}\psi(z)y^{k}|f(z)|^2\frac{\dd x\dd y}{y^2} \sim \frac{3}{\pi}	\int_{Y_{0}(1)}\psi(z)\frac{\dd x\dd y}{y^2}
\end{equation}
for $f \in H_{k}^{*}(q)$ as $q$ goes to infinity. Similarly, we take a smooth, compactly-supported function $\psi(y)$ on $(1,\infty)$. Let $\psi_0(z)$ be the function on $\mathcal{L}$ defined by $\psi_0(iy) = \psi(y)$. We extend $\psi_0$ to the standard fundamental domain $\mathcal{F}$  by 
\begin{equation}
	\begin{aligned}
		\Psi_{0}(z) = \left\{\begin{array}{lr}
			\psi_0(z)   ,&   z\in \mathcal{L},\\
			0 ,&   z \notin \mathcal{L}.
		\end{array}
		\right.
	\end{aligned}
\end{equation}
We then extend it to $\mathbb{H}$ by the $\SL(2,\mathbb{Z})$-invariant property, setting $\Psi(z) := \Psi_0(\gamma^{-1}z)$ for any $z \in \gamma\mathcal{F}\subset \mathbb{H}$ and $\gamma \in \SL(2,\mathbb{Z})$. Thus $\Psi(z)$ is the $\SL(2,\mathbb{Z})$-invariant extension of $\psi(y)$ to $\mathbb{H}$.

For the above $\psi(y)$, we study the $L^2$ restriction norm in the level aspect, namely the quantities
\begin{equation}\label{restricted-norm}
	\mathcal{I}(f,\Psi) := \int_{	 \bigcup_{\gamma \in  \Gamma_0(q)\backslash\Gamma(1) }\gamma\mathcal{L}}\Psi(z)y^{k}|f(z)|^2\dd s(z)
\end{equation}
and
\begin{equation}\label{restricted-norm-1}
	\mathcal{I}(f,1) := \int_{	 \bigcup_{\gamma \in  \Gamma_0(q)\backslash\Gamma(1) }\gamma\mathcal{L}}y^{k}|f(z)|^2\dd s(z)
\end{equation}
as $q$ goes to infinity. Here $f \in H_{k}^{*}(q)$ which $k$ is fixed and $q$ goes to infinity. Note that the integrals are independent of the choice of coset representatives of $\Gamma_0(q)\backslash\Gamma(1)$.  Our main result is an equidistribution statement in the level aspect.

\begin{theorem}\label{Theorem-psi-all-geodesic}Let $\psi(y) \in C_{c}^{\infty}\left((1,\infty)\right)$ and $\Psi(z)$ be the extension of $\psi(y)$ to $\mathbb{H}$. Assume that $\mathcal{I}(f,\Psi)$ is defined by \eqref{restricted-norm}. For the increasing square-free $q$, we have
	\[
	\mathcal{I}(f,\Psi) =  \frac{3}{\pi}\int_{1}^{\infty}\psi(y)\frac{\dd y}{y} + O_{\psi,k,\varepsilon}\left((\log q)^{-1/30+\varepsilon}\right)
	\]
	where $f$ runs over weight $k$, level $q$ Hecke newforms and $\int_{Y_0(q)}y^{k}|f(z)|^2\frac{\dd x\dd y}{y^2} = 1$.
\end{theorem}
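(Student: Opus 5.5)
The plan is to \emph{fold the union of translates back to level one}. Since $\Psi$ is $\SL(2,\mathbb{Z})$-invariant and the integral \eqref{restricted-norm} runs over $\bigcup_{\gamma\in\Gamma_0(q)\backslash\Gamma(1)}\gamma\mathcal{L}$, changing variables on each piece gives
\[
\mathcal{I}(f,\Psi)=\int_1^\infty \psi(y)\,F_f(iy)\,\frac{\dd y}{y},\qquad F_f(z):=\sum_{\gamma\in\Gamma_0(q)\backslash\Gamma(1)} \Im(\gamma z)^k|f(\gamma z)|^2 .
\]
Here $F_f$ is a smooth $\Gamma(1)$-invariant function with $\int_{Y_0(1)}F_f\,\dd\mu=1$. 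The level-aspect QUE \eqref{level_aspect_QUE} says precisely that $F_f\,\dd\mu\to\frac{3}{\pi}\dd\mu$ weakly. The theorem asks for the stronger statement that this also holds along the thin set $\{iy\}$, with a rate $(\log q)^{-1/30+\varepsilon}$.

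\textbf{Explicit form of $F_f$.} For square-free $q$ the cosets $\Gamma_0(q)\backslash\Gamma(1)$ are parametrized by pairs $(d,j)$ with $d\mid q$ and $j \bmod d$, using the Atkin--Lehner operators $W_{q/d}$. Since $f$ is a newform, $f|W_{q/d}=\pm f$. Up to the normalizations of the $W$'s, this gives
\[
F_f(z)=\sum_{d\mid q}\ \sum_{j \bmod d}\Big(\frac{y}{d'}\Big)^{k}\Big|f\Big(\frac{z+j}{d}\cdot(\text{scaling})\Big)\Big|^2 ,
\]
which is written in terms of the Fourier expansion of $f$ alone. I would then expand $F_f(x+iy)=\sum_h c_h(y)e(hx)$, so that $F_f(iy)=\sum_h c_h(y)$.

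\textbf{Main step: the $h=0$ and $h\neq0$ terms.} The averages over $j\bmod d$ in $c_h(y)$ pick out pairs $n\equiv m \pmod d$ in $\sum_{n,m}\lambda_f(n)\lambda_f(m)(nm)^{\frac{k-1}2}e^{-2\pi(n+m)y/d}$.
\begin{itemize}
\item The diagonal $h=0$ terms, i.e.\ the constant term $c_0(y)$, are handled exactly as in the Holowinsky--Soundararajan proof of effective QUE in the level aspect (the incomplete Eisenstein series part). There I would use Soundararajan's weak subconvexity for $L(\frac12+it,f\times f)$ together with the bound $|a_f(1)|^2\asymp (qL(1,\ad f))^{-1}$. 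This should give $c_0(y)=\frac3\pi+O((\log q)^{-\delta})$ uniformly for $y$ in $\operatorname{supp}\psi$.
\item The off-diagonal terms $c_h(y)$, $h\neq0$, are shifted convolution sums $\sum_{n}\lambda_f(n)\lambda_f(n+hd)W(n/(qy))$ of length about $q$. I would bound these with Holowinsky's sieve bound. It gives, uniformly in $|h|\le q^{\varepsilon}$-type ranges,
\[
\sum_{n\le x}|\lambda_f(n)\lambda_f(n+\ell)|\ll x\,(\log x)^{\varepsilon}\prod_{p\le x}\Big(1+\frac{2|\lambda_f(p)|}{p}\Big)\Big/(\log x)^{2}\cdot\tau(\ell)^{O(1)} .
\]
After dividing by $L(1,\ad f)$ this saves a power of $\log q$ unless $\sum_{p\le q}|\lambda_f(p)|/p$ is nearly $\sum 1/p$.
\item In that exceptional case Soundararajan's argument (weak subconvexity via the multiplicativity of $\lambda_f$) wins instead, as in the Holowinsky--Soundararajan dichotomy. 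Balancing the two savings yields an exponent like $1/30$.
\end{itemize}

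\textbf{Main obstacle.} The step I expect to be hardest is controlling $\sum_{h\neq0}c_h(y)$. In the ordinary QUE setting one only needs a few $h$, because the test function is smooth in $x$. Here we evaluate at $x=0$, i.e.\ integrate against a delta function in $x$, so all $h$ contribute. I would try first to truncate $h$ using the exponential decay $e^{-2\pi(n+m)y/d}$: for $|h|d\gg q^{1+\varepsilon}/y$ the terms are negligible. That leaves $|h|\ll q^{1+\varepsilon}/d$, so the Holowinsky bound must be used uniformly in the shift $\ell=hd$, summed over $h$ with divisor weights. The success of the argument hinges on this uniform summation losing at most $(\log q)^{\varepsilon}$.
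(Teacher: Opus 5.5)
Your architecture matches the paper's. Evaluating the level-one trace $F_f$ on $\{iy\}$ is exactly the Atkin--Lehner reduction \eqref{Vertical-lines}, the $j$-average produces $n\equiv m \pmod d$, and $c_0$ is the incomplete Eisenstein period. Also, $\int\psi(y)c_h(y)\frac{\dd y}{y}$ is, up to the negligible $d=1$ piece, the paper's shifted sum $\mathcal{I}(f,\Psi,h)$.

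However, the step you single out as the main obstacle is misdiagnosed, and the truncation you propose would make the argument fail. The decay in $h$ comes from $F_f$ itself, not from the test function, so the claim that all $h$ contribute because you integrate against a delta in $x$ is wrong. In the $d$-th piece, frequency $h\neq 0$ arises only from pairs with $n-m=dh$ and $n,m\ge 1$. Hence $n+m\ge d|h|$, and $e^{-2\pi(n+m)y/d}\le e^{-2\pi|h|y}\le e^{-2\pi|h|}$ for $y$ in the support of $\psi$, uniformly in $q$ and $d$. Together with Deligne's bound and an averaged divisor-sum bound, this is the paper's estimate \eqref{Bounds-I-f-psi-trivial}, $\mathcal{I}(f,\Psi,r)\ll r^{-A}(\log qr)^{6}$. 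Since the target error is only a power of $\log q$, every $|h|>(\log q)^{\varepsilon}$ can be discarded.

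Your cutoff $|h|d\gg q^{1+\varepsilon}/y$ mixes up the scales $q$ and $d$ and asks for $q^{-A}$-negligibility. It leaves $q^{\varepsilon}$ shifts, and $q^{1+\varepsilon}/d$ shifts for small $d$. A per-shift saving of $(\log q)^{-\delta}$ does not survive summation over that many shifts. On the Soundararajan side it is worse still: the spectral expansion of $P_{y\psi,h}$ runs over $t_j\lesssim |h|$, and weak subconvexity loses powers of $t_j$ while saving only a power of $\log q$.

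Second, you never explain how Soundararajan's half reaches the terms with $h\neq 0$. Weak subconvexity cannot be applied to $\sum_n\lambda_f(n)\lambda_f(n+dh)W(\cdot)$ directly. The missing bridge is the paper's key observation, Lemma \ref{Lemma-incomplete-Poincare-series}: $\int\psi(y)c_h(y)\frac{\dd y}{y}=\langle P_{y\psi,h},y^k|f|^2\rangle_q$. In your folded language this is immediate, since unfolding against the level-one function $F_f$ gives $\langle P_{y\psi,h},F_f\rangle_1$. One then expands spectrally, applies Watson--Ichino (Lemma \ref{Watson-formula}), and uses weak subconvexity for $L(\tfrac12,\ad f\times\phi_j)$ and $L(\tfrac12+it,\ad f)$. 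The square root in Watson--Ichino is why this side yields only $\frac{1}{L(1,\ad f)(\log q)^{1/2-\varepsilon}}$.

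The constant term needs the same dichotomy. For $c_0$, weak subconvexity alone gives $O\bigl(\frac{1}{L(1,\ad f)(\log q)^{1-\varepsilon}}\bigr)$, which is not $o(1)$ when $L(1,\ad f)\asymp(\log q)^{-1}$. So $c_0$ also requires Holowinsky's bound $(\log q)^{\varepsilon}M_f(q)^{1/2}$. The exponent $1/30$ comes from balancing $(\log q)^{1/12+\delta/4}$, which uses $M_f\ll(\log q)^{1/6+\varepsilon}L(1,\ad f)^{1/2}$, against $(\log q)^{-1/2-\delta}$, where $L(1,\ad f)=(\log q)^{\delta}$.

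Finally, for square-free $q$ the $\tau(\ell)^{O(1)}$ factor in your sieve bound is not harmless. Here $\ell=dh$ with $d\mid q$, and $\tau(d)$ can exceed any power of $\log q$. The paper uses the form quoted from Nelson, which has no $\ell$-dependence.
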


We now outline the proof of Theorem \ref{Theorem-psi-all-geodesic}. First, we use a parameterization of  $ \bigcup_{\gamma \in  \Gamma_0(q)\backslash\Gamma(1) }\gamma\mathcal{L}$. By Atkin--Lehner theory, we reduce $\mathcal{I}(f,\Psi)$ to
\begin{equation}\label{Vertical-lines}
	\mathcal{I}(f,\Psi) = \int_{1}^{\infty}\psi\left(y\right)\left(y^{k}f(iy)^2 + \sum_{\substack{d \neq 1 \\d |q}}\sum_{\ell \bmod d} \left(\frac{y}{d}\right)^{k}\big|f\left(\frac{-\ell +iy}{d}\right)\big|^2\right) \frac{\dd y}{y}.
\end{equation}
Note that this integration is over the vertical lines. After directly using Fourier expansion and Mellin inversion, we obtain the shifted convolution sums decomposition
\begin{equation}\label{Shifted-convolution-sums-decomposition}
	\mathcal{I}(f,\Psi) = \mathcal{I}(f,\Psi ,0) + 	\sum_{0<|r| \leq (\log q)^{\varepsilon}}\mathcal{I}(f,\Psi ,r) +O_{\psi, k, \varepsilon}\left((\log q)^{-2026}\right) 
\end{equation}
where $\mathcal{I}(f,\Psi ,r)$ is defined by \eqref{Shifted-convolution-sum-f-Psi-r} (at least for $q$ prime).
A key observation is that
\[
\mathcal{I}(f,\Psi, r) = \langle P_{y\psi, r}, y^{k}|f|^2\rangle_q + O_{\psi,k,\varepsilon} (q^{-1+\varepsilon})
\]
where  $P_{h, r}(z)$
is the $r$-th incomplete Poincar\'e series. Then we establish Theorem \ref{Theorem-psi-all-geodesic} by following the effective proof of QUE in the level aspect (see also \cite{Nelson-QUE-level,Lester-Matomaki-Radziwill-small-scale,wattanawanichkul2024effectivecorrelationdecorrelationnewforms}). We will use Holowinsky--Soundararajan's argument.

It is an interesting question whether there is an equidistribution phenomenon restricted to geodesics in the depth aspect. Naturally, we hope the equidistribution for the general level also holds.
\begin{conjecture}Let $\psi(y) \in C_c^{\infty}((1,\infty))$ and $\Psi(z)$ be the extension of $\psi(y)$ to $\mathbb{H}$. Assume that $\mathcal{I}(f,\Psi)$ is defined by \eqref{restricted-norm}. For the increasing positive integer $q$, we have 
	\begin{equation}
		\lim_{q\rightarrow \infty}\mathcal{I}(f,\Psi) = \frac{3}{\pi}\int_{1}^{\infty}\psi(y)\frac{\dd y}{y}
	\end{equation}
	where $f$ runs over weight $k$, level $q$ Hecke newforms and $\int_{Y_0(q)}y^{k}|f(z)|^2\frac{\dd x\dd y}{y^2} = 1$.
\end{conjecture}
\begin{remark}
	Note that \cite{Hu-mass-equidistribution} shows that the standard QUE (equation (1.2)) holds for $\psi\in C_c^\infty(Y_0(p))$ at least when $p|q$ and $p$ is fixed. It would be interesting to determine whether the restricted QUE remains valid for general $\Gamma_0(p)$-invariant test functions.
\end{remark}
In the main body of the paper, we assume that $q$ is a prime for a clean proof, except in \S\ref{sec:4}. For the case of a general square-free $q$, we summarize the necessary modifications in \S\ref{sec:4}. The proof is quite similar and involves no extra difficulty.

	\subsection{Value distribution in the full geodesics and relations to twisted $L$-functions}For  the test function $\psi(y) \equiv 1$, we have the following result.
	\begin{theorem}\label{Theorem-1-all-geodesic}Assume that $\mathcal{I}(f,1)$ is defined by  \eqref{restricted-norm-1}. For the increasing prime $q$, we have
		\begin{equation}
			\mathcal{I}(f,1) =	\frac{3}{\pi}\log q + \frac{3}{\pi}\frac{L'(1,\ad f)}{L(1,\ad f)} + O_{k,\varepsilon}\left((\log q)^{\varepsilon}\right)
		\end{equation}
		where $f$ runs over weight $k$, level $q$ Hecke newforms and $\int_{Y_0(q)}y^{k}|f(z)|^2\frac{\dd x\dd y}{y^2} = 1$.
	\end{theorem}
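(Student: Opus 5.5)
The plan is to follow the same reduction as in the proof of Theorem \ref{Theorem-psi-all-geodesic}, now with $\psi\equiv 1$, and to identify the main term through the diagonal of the resulting quadratic form rather than through QUE. For $q$ prime the only divisors in \eqref{Vertical-lines} are $d=1$ and $d=q$, so
\[
\mathcal{I}(f,1)=\int_1^\infty y^k f(iy)^2\frac{\dd y}{y}+\sum_{\ell \bmod q}\int_1^\infty \Big(\frac{y}{q}\Big)^k\Big|f\Big(\frac{-\ell+iy}{q}\Big)\Big|^2\frac{\dd y}{y}.
\]
I will insert the Fourier expansion in the second term and execute the sum over $\ell$. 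Orthogonality of additive characters modulo $q$ gives
\[
q|a_f(1)|^2\sum_{m\equiv n \,(\mathrm{mod}\, q)}\lambda_f(m)\lambda_f(n)(mn)^{\frac{k-1}{2}}e^{-2\pi(m+n)y/q}.
\]
This splits into the diagonal $m=n$ and the shifts $m-n=rq$ with $r\neq 0$.

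\textbf{Diagonal term.} After the change of variables $y\mapsto qy$, the diagonal becomes
\[
q|a_f(1)|^2\int_{1/q}^\infty y^k\sum_n\lambda_f(n)^2n^{k-1}e^{-4\pi ny}\frac{\dd y}{y}.
\]
I will write this by Mellin inversion as
\[
q|a_f(1)|^2\,\frac{1}{2\pi i}\int_{(2)}\frac{\Gamma(s+k-1)}{(4\pi)^{s+k-1}}\,\frac{\zeta(s)L(s,\ad f)}{\zeta(2s)}\,\frac{q^{s-1}}{s-1}\,\dd s,
\]
up to the bounded local factor at $q$; the factor $q^{s-1}/(s-1)$ comes from integrating the truncation $y\ge 1/q$ against $y^{s-1}$. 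Shifting the contour to $\Re s=1/2+\varepsilon$ picks up a double pole at $s=1$. Its residue equals
\[
\frac{\Gamma(k)L(1,\ad f)}{(4\pi)^{k}\zeta(2)}\Big(\log q+\frac{L'(1,\ad f)}{L(1,\ad f)}+C_k\Big),
\]
where $C_k$ collects $\Gamma'/\Gamma(k)$, $\log 4\pi$, $-2\zeta'(2)/\zeta(2)$ and Euler's constant. Multiplying by $q|a_f(1)|^2=\frac{2\pi^2(4\pi)^{k-1}}{(k-1)L(1,\ad f)\Gamma(k-1)}$ turns the prefactor into exactly $3/\pi$, so the diagonal yields $\frac{3}{\pi}\log q+\frac{3}{\pi}\frac{L'}{L}(1,\ad f)+O_k(1)$. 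On the shifted line I will use the convexity bound for $L(s,\ad f)$ together with $L(1,\ad f)^{-1}\ll\log q$ and the factor $q^{s-1}$. This makes the remaining integral $O(q^{-1/2+\varepsilon})$.

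\textbf{Identity coset.} For the term $\int_1^\infty y^kf(iy)^2\,\dd y/y$ I will use the Fourier expansion directly. Since $|a_f(1)|^2\ll_k (\log q)/q$ and the exponential decay makes the sum over $n$ essentially bounded for $y\ge 1$, this term is $O(q^{-1+\varepsilon})$. (Equivalently, by Atkin--Lehner it equals the integral over $(0,1/q)$ of the full vertical line, which is small.)

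\textbf{Off-diagonal terms; main obstacle.} The hardest step is the shifts $r\neq 0$. As in \eqref{Shifted-convolution-sums-decomposition}, the exponential weight $e^{-2\pi(m+n)y/q}$ with $m-n=rq$ and $y\ge 1$ makes the terms with $|r|>(\log q)^{\varepsilon}$ negligible. For each remaining $r$ I will use the identification $\mathcal{I}(f,1,r)=\langle P_{h,r},y^k|f|^2\rangle_q+O(q^{-1+\varepsilon})$ with $h(y)=y\mathbf{1}_{y\ge 1}$, smoothed at $y=1$ at a cost of $O((\log q)^{\varepsilon})$. The test function is no longer compactly supported near the cusp, but $e^{-2\pi|r|y}$ supplies the decay there. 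The Holowinsky shifted-convolution bound $\sum_{n\le x}|\lambda_f(n)\lambda_f(n+h)|\ll x(\log x)^{-\delta}\prod_p(1+2|\lambda_f(p)|/p)$, combined with $L(1,\ad f)^{-1}\ll\log q$, should give each such term $O_k(1)$ uniformly in $r$; the rapid decay in $r$ from $e^{-2\pi|r|y}$ may even make the $r$-sum converge absolutely. I will not seek any cancellation here and will accept the crude bound: summing over the $(\log q)^{\varepsilon}$ admissible values of $r$ gives the stated error $O_{k,\varepsilon}((\log q)^{\varepsilon})$. The delicate point is the uniformity of the Holowinsky bound in the shift $rq$, which is large compared to the effective length $n\asymp q/y$ of the sums. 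I will handle it by writing $n=m+rq$ and applying the sieve bound for the fixed shift $rq$, treating $q$ as part of $h$.
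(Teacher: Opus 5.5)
Your overall architecture matches the paper's: Fourier expansion, orthogonality modulo $q$, a Rankin--Selberg contour shift for the diagonal whose double pole gives $\frac{3}{\pi}\bigl(\log q+\frac{L'}{L}(1,\ad f)+O_k(1)\bigr)$ (your residue and the constant $3/\pi$ are correct), a negligible identity coset, and shifted-convolution bounds for $r\neq 0$. However, both of your error estimates fail as stated, and each failure leaves an error at least as large as the main term $\frac{3}{\pi}\log q$.

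For the diagonal, on $\Re s=\sigma\in(0,1)$ your integrand has size $L(1,\ad f)^{-1}q^{\sigma-1}|L(s,\ad f)|$. Since $\ad f$ has conductor $q^2$, convexity only gives $|L(\sigma+it,\ad f)|\ll_t q^{1-\sigma+\varepsilon}$. So the factor $q^{s-1}$ is exactly cancelled on every line, and you get $O(q^{\varepsilon})$, not $O(q^{-1/2+\varepsilon})$; your claim ignores the size of $L(s,\ad f)$. This is a genuine convexity barrier: the diagonal is a sum of $\lambda_f(n)^2$ of length $\asymp q$, the square root of the conductor. To reach $O((\log q)^{\varepsilon})$ while $L(1,\ad f)^{-1}$ may be as large as $\log q$, you must save essentially a full $\log q$ over convexity on $\Re s=\frac12$. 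The paper does this with Soundararajan's weak subconvexity bound $L(\frac12+it,\ad f)\ll_{k,\varepsilon}|t|^{3/4}q^{1/2}(\log q)^{-1+\varepsilon}$ (Lemma \ref{Weak-subconvexity}). Without an input of this kind the argument does not recover the theorem.

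For the off-diagonal, Nelson's version of Holowinsky's bound is uniform in the shift with absolute constants, so uniformity in $rq$ is not the problem. It gives each shift a contribution $\ll(\log q)^{\varepsilon}M_f(q|r|)$, where $M_f(x)=\prod_{p\le x}(1+2|\lambda_f(p)|/p)\big/\bigl((\log ex)^2L(1,\ad f)\bigr)$. If you use only $|\lambda_f(p)|\le 2$ and $L(1,\ad f)^{-1}\ll\log q$, this is $\ll(\log q)^{-2}(\log q)^{4}\log q=(\log q)^{3}$ per shift, not $O_k(1)$.

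The missing ingredient is the Holowinsky--Soundararajan inequality $L(1,\ad f)^{-1}\ll_{k,\varepsilon}(\log\log q)^{9}\prod_{p\le kq}\bigl(1-\frac{\lambda_f(p)^2-1}{p}\bigr)$. It correlates $L(1,\ad f)$ with the Euler product in $M_f$. Since $2|\lambda|-(\lambda^2-1)=2-(|\lambda|-1)^2\le 2$, it yields $M_f(q(\log q)^{\varepsilon})\ll(\log q)^{\varepsilon}$. Only then do the admissible shifts (with the $e^{-2\pi|r|}$ decay) contribute $O_{k,\varepsilon}((\log q)^{\varepsilon})$ in total.
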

	A similar asymptotic formula can be obtained for square-free levels $q$ (see the proof in \S \ref{sec:5}). For the purposes of our applications, we now assume that $q$ is prime. It is interesting that $\mathcal{I}(f,1)$ is closely related to the second moments of $L$-functions. We observe that
	\[
	2\mathcal{I}(f,1) = 2\int_{0}^{\infty}y^{k}|f(iy)|^2 \frac{\dd y}{y} +\sum_{0 < |\ell| \leq \frac{q-1}{2}}\int_{0}^{\infty}y^{k}|f(\frac{\ell}{q}+iy)|^2 \frac{\dd y}{y}
	\]
	from a dual property of geodesic branches (see \eqref{I-ell} and \eqref{Parseval}). By Parseval's identity, we get
	\begin{equation}\label{I-f-1-moments-L-function}
		\begin{aligned}
			2\mathcal{I}(f,1) 
			& = \sum_{0\leq|\ell|\leq \frac{q-1}{2}}\frac{|a_{f}(1)|^2}{(2\pi)^{k+1}}\int_{-\infty}^{+\infty}|\Gamma(\frac{k}{2}+it)|^2 |L(\frac{1}{2}+it, -\frac{\ell}{q}, f)|^2 \dd t
		\end{aligned}
	\end{equation}
	where $L(s , -\frac{\ell}{q} ,f)$ is the additive twisted $L$-function defined by \eqref{Additive-twisted-L-function} for $\ell \neq 0$ and $L(s, 0, f) = 2L(s,f)$. Hence, Theorem \ref{Theorem-1-all-geodesic} implies
	\begin{multline}\label{Additive-multiplicative-moments}
		\frac{1}{q}\sum_{0<|\ell|\leq \frac{q-1}{2}}\int_{-\infty}^{+\infty}|\Gamma(\frac{k}{2}+it)|^2|L(\frac{1}{2}+it, -\frac{\ell}{q}, f)|^2 \dd t \\= \left(\frac{12L(1,\ad f)}{\pi^2}\log q +\frac{12}{\pi^2}L'(1,\ad f) \right)\int_{-\infty}^{+\infty}|\Gamma(\frac{k}{2}+it)|^2\dd t + O_{k,\varepsilon}((\log q)^{\varepsilon}).
	\end{multline}
	The same result holds for the multiplicative twisted $L$-function. For $\chi$ a primitive character mod $q$, we define the multiplicative twisted $L$-function 
	\begin{equation}\label{Multiplicative-twisted}
		L(s,\chi,f) := \sum_{n \geq 1}\frac{\lambda_{f}(n)\chi(n)}{n^{s}},\quad\quad \Re(s)>1.
	\end{equation}
	\begin{corollary}\label{Corollary-twisted}Assume that $q$ is prime.  Let $f$ be a Hecke newform of weight $k$ and level $q$. Then we have
		\begin{multline}
			\frac{1}{q}{\sum_{\chi \bmod q}}^{*}\int_{-\infty}^{+\infty}|\Gamma(\frac{k}{2}+it)|^2|L(\frac{1}{2}+it, \chi,f)|^2 \dd t \\
			= \left(\frac{12L(1,\ad f)}{\pi^2}\log q +\frac{12}{\pi^2}L'(1,\ad f) \right)\int_{-\infty}^{+\infty}|\Gamma(\frac{k}{2}+it)|^2\dd t  + O_{k,\varepsilon}((\log q)^{\varepsilon}).
		\end{multline}
	\end{corollary}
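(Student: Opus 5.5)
Corollary~\ref{Corollary-twisted} is a consequence of \eqref{Additive-multiplicative-moments}: we transfer the additive-twist moment to the multiplicative one by Gauss sums and orthogonality of characters. For $q$ prime and $(\ell,q)=1$, write the additive character as a combination of multiplicative ones:
\[
e\Big(-\frac{\ell n}{q}\Big) = \frac{1}{q-1}\sum_{\chi \bmod q}\tau(\bar\chi)\chi(-\ell n)\quad\text{for } (n,q)=1 ,
\]
where the principal character contributes $\tau(\chi_0)=-1$. Terms with $q\mid n$ have $e(-\ell n/q)=1$. Splitting $n$ according to $q\mid n$ gives the decomposition
\[
L(s,-\tfrac{\ell}{q},f)=\frac{1}{q-1}{\sum_{\chi}}^{*}\tau(\bar\chi)\chi(-\ell)L(s,\chi,f) + E(s),
\]
with the error term
\[
E(s)= -\tfrac{1}{q-1}\,L^{(q)}(s,f) + \sum_{q\mid n}\lambda_f(n)n^{-s}.
\]
Because $f$ is new of prime level $q$, we have $\lambda_f(q^j n')=\lambda_f(q)^j\lambda_f(n')$ with $|\lambda_f(q)|=q^{-1/2}$. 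Hence $E(s)$ is a fixed multiple of $L(s,f)$ with coefficient $O(q^{-1/2})$, and it does not depend on $\ell$.

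Next we expand $|L(\tfrac12+it,-\tfrac{\ell}{q},f)|^2$ and sum over $0<|\ell|\le \frac{q-1}{2}$, which is a full set of reduced residues. Orthogonality in $\ell$ gives
\[
\sum_{\ell}\chi_1(-\ell)\bar\chi_2(-\ell)=(q-1)\,\delta_{\chi_1=\chi_2},
\]
and for primitive $\chi$ we have $|\tau(\bar\chi)|^2=q$. The main term therefore becomes
\[
\frac{q}{q-1}{\sum_{\chi}}^{*}|L(\tfrac12+it,\chi,f)|^2 .
\]
The cross terms between the primitive-character sum and $E$ vanish, since ${\sum_\ell}\chi(-\ell)=0$ for $\chi\ne\chi_0$. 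The remaining term is $(q-1)|E|^2\ll q^{-1}\cdot|L(\tfrac12+it,f)|^2\cdot(\text{bounded})$. After integrating against $|\Gamma(\frac k2+it)|^2$ and dividing by $q$, this is negligible by the convexity bound $L(\tfrac12+it,f)\ll (q(1+|t|))^{1/4+\varepsilon}$, since $\Gamma$ decays exponentially.

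We obtain
\[
\frac1q\sum_{\ell}\int|\Gamma|^2|L(\cdot,-\tfrac{\ell}{q},f)|^2\,\dd t=\frac{1}{q-1}{\sum_\chi}^{*}\int|\Gamma|^2|L(\cdot,\chi,f)|^2\,\dd t+O(q^{-1/2+\varepsilon}).
\]
Replacing $\frac{1}{q-1}$ by $\frac1q$ costs a relative factor $1+O(1/q)$. By \eqref{Additive-multiplicative-moments} the moment itself is $O_k(\log q)$ (using $L(1,\ad f),L'(1,\ad f)\ll(\log q)^{O(1)}$), so this change is absorbed into $O((\log q)^\varepsilon)$. The corollary then follows from \eqref{Additive-multiplicative-moments}.

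The main point needing care is the bookkeeping of the $q\mid n$ and principal-character contributions. In particular, we must check that for a newform of level $q$ the twist $L(s,\chi,f)$ matches the definition \eqref{Multiplicative-twisted}, with the $q$-part killed because $\chi(q)=0$. We must also check that the leftover $E$ is uniformly small. Both follow from $|\lambda_f(q)|^2=1/q$ and the convexity bound.
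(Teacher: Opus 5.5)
Your proposal is correct and follows the paper's route: Gauss-sum decomposition of the additive twist into multiplicative twists (the paper's Proposition~\ref{Additive-multiplicative-L-function}), orthogonality over $\ell$ with $|\tau(\bar\chi)|^2=q$, convexity to bound the principal-character and $q\mid n$ leftovers, and then \eqref{Additive-multiplicative-moments}. Putting $\chi_0$ into $E$ so that the cross terms vanish exactly is slightly cleaner bookkeeping than the paper's. Note also that on $\Re s=\tfrac12$ the coefficient of $L(s,f)$ in $E$ is actually $O(q^{-1})$, since $|\lambda_f(q)q^{-s}|=q^{-1}$; this is what your bound $(q-1)|E|^2\ll q^{-1}|L(\tfrac12+it,f)|^2$ uses, although your stated $O(q^{-1/2})$ would still give a negligible $O(q^{-1/2+\varepsilon})$.
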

	\begin{remark}Without the average over the archimedean place, these second moments of twisted $L$-functions with a power saving error term have been studied in a sequence of works \cite{B-F-K-M-twisted,B-M-second-moment-twisted,K-M-S-Kloosterman-sum,second-moments-book}. But in their works, the implied constants in the error terms depend on $f$, for instance on its level.
	\end{remark}
	\begin{remark}As mentioned by Blomer--Mili\'cevi\'c \cite[P. 455, Equ.(1.5) ]{B-M-second-moment-twisted}, it is interesting that such an average gives possibly good asymptotic formulas. Moreover, with this average, many high moments (sixth or eighth) of $L$-functions have been established (see also \cite{C-I-S-6-moment,C-L-8-moment}).
	\end{remark} 

			\vspace{2mm}

			\textbf{Plan of the paper.} The rest of this paper is organized as follows. In \S \ref{sec:2}, we reduce the $L^2$ restricted norm to the shifted convolution sums and truncate the large shifts. In \S \ref{sec:3}, we apply Holowinsky--Soundararajan's argument to prove Theorem \ref{Theorem-psi-all-geodesic} for prime levels. In \S \ref{sec:4}, we give the sketch of the proof of Theorem \ref{Theorem-psi-all-geodesic} with square-free levels. In \S \ref{sec:5}, we prove Theorem \ref{Theorem-1-all-geodesic}. In \S \ref{sec:6}, we first introduce the dual property about geodesic branches. Then we complete the proof of Corollary \ref{Corollary-twisted}. 
			
			\textbf{Notation.} Throughout the paper, $\varepsilon$ is an arbitrarily small positive number; all of them
			may be different at each occurrence. As usual, $e(x) = e^{2\pi ix}$. We use the standard Landau and Vinogradov notations \( O(\cdot) \), \( o(\cdot) \), \(\ll\),  \(\gg\), $\asymp$ and $\sim$. Specifically, we express \( X \ll Y \), \( X = O(Y) \), or \( Y \gg X \) when there exists a constant \( C \) such that \( |X| \leq C|Y| \). If the constant $C=C_s$ depends on some object $s$, we write $X=O_s(Y)$. 
			We use \( X \asymp Y \) to denote that $c_1Y\leq X\leq c_2Y$ for some positive constants $c_1, c_2$.

			\section{\label{sec:2}Reduction to the shifted convolution sums}
			In this section, we assume that $q$ is prime. For $\psi(y) \in C_{c}^{\infty}((1,\infty))$, we study the integral
			\[
			\mathcal{I}(f,\Psi) = \int_{	 \bigcup_{\gamma \in  \Gamma_0(q)\backslash\Gamma(1) }\gamma\mathcal{L}}\Psi(z)y^{k}|f(z)|^2\dd s(z).
			\]

			\subsection{Parameterization and Fourier expansion}
			We choose the special coset representatives
			\begin{equation}\label{coset-representatives}
				\Gamma_0(q)\backslash\Gamma(1) = \big\{\begin{pmatrix}
					1 &0\\
					0 &1
				\end{pmatrix}, \begin{pmatrix}
					0 &-1\\
					1 &-\ell
				\end{pmatrix}, 0\leq|\ell|\leq \frac{q-1}{2}\big\}.
			\end{equation}
			Let $\mathcal{L}_\ell = \begin{pmatrix}
				0 &-1\\
				1 &-\ell
			\end{pmatrix}\mathcal{L}$. For $z = iy \in \mathcal{L}$, we have $\begin{pmatrix}
				0 &-1\\
				1 &-\ell
			\end{pmatrix}iy = \frac{\ell}{\ell^2 +y^2}+ \frac{y}{\ell^2 + y^2}i$. This yields a parameterization of $\mathcal{L}_\ell$. Consequently, we have
			\begin{equation}
				\mathcal{I}(f,\Psi)  = \mathcal{I}'(f,\Psi) + \sum_{0\leq |\ell| \leq \frac{q-1}{2}}\mathcal{I}_{\ell}(f, \Psi)
			\end{equation}
			where
			\[
			\mathcal{I}'(f,\Psi) = \int_{1}^{\infty}\psi_0(iy)y^{k}f(iy)^2 \frac{\dd y}{y}
			\]
			and
			\begin{equation}
				\begin{aligned}
					\mathcal{I}_{\ell}(f, \Psi) &= \int_{1}^{\infty}\Psi\left(\frac{\ell}{\ell^2 +y^2}+i\frac{y}{\ell^2 + y^2}\right)\left(\frac{y}{\ell^2 + y^2}\right)^{k}\big|f\left(\frac{\ell}{\ell^2 +y^2}+ i\frac{y}{\ell^2 + y^2}\right)\big|^2 \frac{\dd y}{y}\\
					& = \int_{1}^{\infty}\psi_0\left(iy\right)\left(\frac{y}{\ell^2 + y^2}\right)^{k}\big|f\left(\frac{\ell}{\ell^2 +y^2}+ i\frac{y}{\ell^2 + y^2}\right)\big|^2 \frac{\dd y}{y}.
				\end{aligned}
			\end{equation}	 
			By Atkin--Lehner theory \cite{Atkin-Lehner-theory} (see also \cite[Equ. (6.68)]{Iwaniec-Topics}), we get
			\[
			q^{-k/2}z^{-k}f\left(-\frac{1}{qz}\right) = \pm f(z).
			\]
			Then we obtain
			\[
			\left(\frac{y}{\ell^2 + y^2}\right)^{k}\big|f\left(\frac{\ell}{\ell^2 +y^2}+ i\frac{y}{\ell^2 + y^2}\right)\big|^2 = q^{-k}y^{k}\big|f(\frac{-\ell +i y}{q})\big|^2.
			\]
			Then we have	
			\begin{equation}
				\mathcal{I}(f,\Psi) = \int_{1}^{\infty}\psi\left(y\right)\left(y^{k}f(iy)^2 + \sum_{0\leq |\ell| \leq \frac{q-1}{2}} \left(\frac{y}{q}\right)^{k}\big|f\left(\frac{-\ell +iy}{q}\right)\big|^2\right) \frac{\dd y}{y}.
			\end{equation}
			By the Fourier expansion, we have
			\begin{equation}\label{Fourier-expansion-I-f-Psi}
				\begin{aligned}
					\mathcal{I}(f,\Psi)& = |a_f(1)|^2 \sum_{n\geq 1}\sum_{m \geq 1}\lambda_{f}(n)\lambda_{f}(m)(nm)^{\frac{k-1}{2}}\\
					&\quad\times \int_{1}^{\infty}\psi(y)\left(y^{k}e^{-2\pi (n+m)y} +\sum_{0\leq |\ell| \leq \frac{q-1}{2}}\left(\frac{y}{q}\right)^{k}e^{-\frac{2\pi (n+m)y}{q}}e^{-\frac{2\pi i (n-m)\ell}{q}} \right)\frac{\dd y}{y}\\
					& = |a_f(1)|^2 \sum_{n\geq 1}\sum_{m \geq 1}\lambda_{f}(n)\lambda_{f}(m)(nm)^{\frac{k-1}{2}}\\
					&\quad\times \int_{1}^{\infty}\psi(y)\left(y^{k}e^{-2\pi (n+m)y} +\left(\frac{y}{q}\right)^{k}e^{-\frac{2\pi (n+m)y}{q}}q\delta\left(n \equiv m \bmod q\right) \right)\frac{\dd y}{y}.
				\end{aligned}
			\end{equation}
			\begin{remark}
				Essentially, we have the following bound
				\[
				\mathcal{I}'(f,\Psi) \ll q^{-1}L(1,\ad f)^{-1} \quad\quad	\mathcal{I}_{\ell}(f,\Psi) \ll \min\{q^{\varepsilon} , \frac{\ell^2}{q^{1-\varepsilon}}\} 
				\]
				by using Deligne's bound trivially. Then $\mathcal{I}(f,\Psi) \ll q^{1+\varepsilon}$. The same bound also follows from the trivial sup norm bound $\|y^{k/2}f\|_{\infty} \ll q^{\varepsilon}$.
				
				For $r \in \mathbb{Z}$, we define 
				\begin{equation}\label{Shifted-convolution-sum-f-Psi-r}
					\begin{aligned}
						\mathcal{I}&(f,\Psi ,r) =\\	&q|a_{f}(1)|^2\sum_{n\geq 1}\sum_{m \geq 1}\lambda_f(n)\lambda_f(m)(nm)^{\frac{k-1}{2}}\int_{0}^{\infty}\psi(y)\left(\frac{y}{q}\right)^{k}e^{-\frac{2\pi(n+m)y}{q}}\delta(n-m = qr)\frac{\dd y}{y}.
					\end{aligned}
				\end{equation}
			\end{remark}
			\subsection{The shifted convolution sums}
			By Mellin inversion, we have
			\[
			y^{k}e^{-2\pi(n+m)y} = \frac{1}{2\pi i}\int_{(3)}\left(2\pi(n+m)\right)^{-k-s}\Gamma(k+s)y^{-s}\dd s. 
			\]
			Hence
			\begin{equation}
				\begin{aligned}
					&\mathcal{I}(f,\Psi) = |a_f(1)|^2 \sum_{n\geq 1}\sum_{m \geq 1}\lambda_{f}(n)\lambda_{f}(m)(nm)^{\frac{k-1}{2}}\\
					&\times  \frac{1}{2\pi i}\int_{(3)}\left(2\pi(n+m)\right)^{-k-s}\Gamma(k+s)\int_{1}^{\infty}\psi(y)\left(y^{-s} +\left(\frac{y}{q}\right)^{-s} q\delta\left(n\equiv m \bmod q\right) \right)\frac{\dd y}{y}\dd s.
				\end{aligned}
			\end{equation}
			
			The contribution of the term without the $\delta$ symbol is bounded by $O_{\psi}(q^{-1+\varepsilon})$ by a trivial application of Deligne's bound. We also have 
			\begin{equation}
				\mathcal{I}(f,\Psi ,r) 
				= \frac{1}{L(1,\ad f)} \sum_{n \geq 1}\frac{\lambda_{f}(n)\lambda_{f}(n +qr)\left(\frac{2\sqrt{n(n+qr)}}{2n+qr}\right)^{k-1}}{2n+qr}V(2n + qr, q)
			\end{equation}
			where
			\[
			V(t,q) = \frac{1}{2i}\int_{(3)}\widetilde{\psi}(-s)(\frac{2\pi t}{q})^{-s}\frac{\Gamma(s+k)}{\Gamma(k)} \dd s.
			\]
			Here  $\widetilde{\psi}(s) = \int_{1}^{\infty}\psi(y)y^{s}\frac{\dd y}{y}$ and we have
			\[
			V(t,q) \ll_{\psi,k}  \left(1 + \frac{t}{q}\right)^{-A}
			\]
			for any $A \geq 1$. Therefore, we get
			\[
			\mathcal{I}(f,\Psi) = 	\sum_{r\in \mathbb{Z}}\mathcal{I}(f,\Psi ,r) +O_{\psi,\varepsilon}(q^{-1+\varepsilon}). 
			\]

			\subsection{Estimates of $\mathcal{I}(f,\Psi ,r)$ for large $r$}
			By Deligne's bound, we get
			\begin{equation}\label{Bounds-I-f-psi-trivial}
				\begin{aligned}
					\mathcal{I}(f,\Psi ,r)
					& \ll_{\psi,k}\frac{1}{L(1,\ad f)} \sum_{n \geq 1}\frac{d(n)d(n +qr)}{2n+qr} \left(1 + \frac{2n+qr}{q}\right)^{-A}\\
					& \ll_{\psi,k}\frac{1}{L(1,\ad f)}\left(\sum_{n \leq \frac{qr}{2}}\frac{d(n)d(n +qr)}{qr} r^{-A} + \sum_{n \geq \frac{qr}{2}}\frac{d(n)d(n +qr)}{n} \left( \frac{n}{q}\right)^{-A}\right)\\
					& \ll_{\psi,k} r^{-A}(\log qr)^{6}.
				\end{aligned}
			\end{equation}
			Then for $r \geq (\log q)^{\varepsilon}$, we get
			\[
			\mathcal{I}(f,\Psi ,r) \ll_{\psi,k}(r \log q)^{-2026}
			\]
			provided $A$ is chosen sufficiently large. Thus, we obtain
			\begin{equation}
				\mathcal{I}(f,\Psi) = \mathcal{I}(f,\Psi ,0) + 	\sum_{0<|r| \leq (\log q)^{\varepsilon}}\mathcal{I}(f,\Psi ,r) +O_{\psi, k, \varepsilon}\left((\log q)^{-2026}\right). 
			\end{equation}
			\section{\label{sec:3}Applying Holowinsky--Soundararajan's argument}
			\subsection{Incomplete Eisenstein series and incomplete Poincar\'e series}
			Let
			\[
			\Gamma_{\infty} := \big\{\begin{pmatrix}
				1 & n\\
				0 &1
			\end{pmatrix}, n \in \mathbb{Z}\big\}.
			\]
			For $\psi(y) \in C_{c}^{\infty}\left((1,\infty)\right)$, we define the $r$-th incomplete Poincar\'e series by
			\[
			P_{\psi , r}(z) := \frac{1}{2}\sum_{\gamma \in \Gamma_{\infty}\backslash\SL(2,\mathbb{Z})}\psi(\Im\gamma z)e^{-2\pi i r \Re(\gamma z)}.
			\]
			When $r = 0$, we call it the incomplete Eisenstein series
			\[
			E_{\psi}(z) := P_{\psi, 0}(z) = \frac{1}{2}\sum_{\gamma \in \Gamma_{\infty}\backslash\SL(2,\mathbb{Z})}\psi(\Im\gamma z).
			\]
			The key observation is the following lemma.
			\begin{lemma}\label{Lemma-incomplete-Poincare-series}Let $\psi(y) \in C_{c}^{\infty}\left((1,\infty)\right)$ and $\Psi(z)$ be the extension to $\mathbb{H}$ defined previously. For any $\varepsilon >0$ and $r\in\mathbb{Z}$, we have
				\begin{equation}
					\mathcal{I}(f,\Psi ,r) = \int_{Y_{0}(q)}P_{y\psi ,r}(z)y^{k}|f(z)|^{2}\frac{\dd x\dd y}{y^2} + O_{k,\varepsilon}\left(q^{-1+\varepsilon}\right).
				\end{equation}	
				In particular, we have
				\begin{equation}
					\mathcal{I}(f,\Psi ,0) = \int_{Y_{0}(q)}E_{y\psi}(z)y^{k}|f(z)|^{2}\frac{\dd x\dd y}{y^2} + O_{k,\varepsilon}\left(q^{-1+\varepsilon}\right).
				\end{equation}	
			\end{lemma}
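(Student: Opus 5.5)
Unfold the incomplete Poincar\'e series on $Y_0(q)$ and compare the resulting shifted convolution sum with $\mathcal{I}(f,\Psi,r)$ term by term. Since $P_{y\psi,r}$ is $\SL(2,\mathbb{Z})$-invariant, we write $\Gamma_\infty\backslash\SL(2,\mathbb{Z}) = \bigcup_{\sigma}\Gamma_\infty\backslash\Gamma_0(q)\sigma$ with $\sigma$ running over $\Gamma_0(q)\backslash\SL(2,\mathbb{Z})$, i.e. over the two cusps $\infty$ and $0$ of $\Gamma_0(q)$ ($q$ prime). Unfolding gives
\[
\int_{Y_0(q)}P_{y\psi,r}\,y^k|f|^2\,\frac{\dd x\dd y}{y^2} = \frac12\sum_{\mathfrak a\in\{\infty,0\}}\int_0^\infty\!\!\int_0^{w_{\mathfrak a}} y\psi(y)e^{-2\pi i r x}\,\big(y^k|f|^2\big)\big|\sigma_{\mathfrak a}(z)\,\frac{\dd x\dd y}{y^2},
\]
where $w_\infty=1$, $w_0=q$, and $\sigma_{\mathfrak a}$ are scaling matrices. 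The factor $\frac12$ from $\pm I$ is absorbed as usual.

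At the cusp $0$, we use the Atkin--Lehner relation $q^{-k/2}z^{-k}f(-1/(qz))=\pm f(z)$ already used in \S\ref{sec:2}. This turns $y^k|f|^2$ at cusp $0$ (width $q$) into $(y/q)^k|f((x+iy)/q)|^2$ with $x\in[0,q)$. Inserting the Fourier expansion and integrating over $x\in[0,q]$ against $e(-rx)$ picks out $n-m=\pm qr$ with a factor $q$. The $y$-integral is $\int_0^\infty \psi(y)(y/q)^k e^{-2\pi(n+m)y/q}\,\dd y/y$, with the extra factor $y$ in $y\psi$ cancelling one power of $y$ from $\dd y/y^2$. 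This reproduces exactly the definition \eqref{Shifted-convolution-sum-f-Psi-r} of $\mathcal{I}(f,\Psi,r)$, up to the sign convention on $r$, which is harmless since the sum is symmetric under $n\leftrightarrow m$.

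At the cusp $\infty$ (width $1$), the same computation yields
\[
|a_f(1)|^2\sum_{n-m=r}\lambda_f(n)\lambda_f(m)(nm)^{\frac{k-1}{2}}\int\psi(y)y^ke^{-2\pi(n+m)y}\,\frac{\dd y}{y}.
\]
Because $\psi$ is supported in $(1,\infty)$, the exponential factor $e^{-2\pi(n+m)y}$ makes the sum absolutely convergent, and Deligne's bound gives $O_{\psi,k}(|a_f(1)|^2)$. By the formula for $|a_f(1)|^2$ together with $L(1,\ad f)\gg(\log q)^{-1}$, this is $O_{k,\varepsilon}(q^{-1+\varepsilon})$. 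Here the implied constant depends on $\psi$ and decays rapidly in $r$; only $|r|\le(\log q)^{\varepsilon}$ matters anyway. This term is the source of the error in the statement, and it is the same term discarded in \S\ref{sec:2}.

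The main obstacle is bookkeeping: choosing the scaling matrix for the cusp $0$ correctly (for instance $\sigma_0=\begin{pmatrix}0&-1\\1&0\end{pmatrix}$ composed with the width-$q$ scaling), and checking that the coset decomposition matches the representatives \eqref{coset-representatives} with the right multiplicities and the factor $\frac12$. I would verify this by checking the case $\psi\equiv$ const heuristically, and by noting that the cusp-$0$ contribution literally equals the $\delta(n\equiv m\bmod q)$ part of \eqref{Fourier-expansion-I-f-Psi} localized to the frequency $r$. The case $r=0$ is then immediate, since $P_{y\psi,0}=E_{y\psi}$.
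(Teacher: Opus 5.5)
Your proposal is correct and is essentially the paper's proof. You unfold over the two cusps of $\Gamma_0(q)$ and use the Atkin--Lehner relation at the cusp $0$ to produce the width-$q$ integral, whose Fourier expansion is exactly $\mathcal{I}(f,\Psi,r)$; you then bound the cusp-$\infty$ term by $O(|a_f(1)|^2)=O(q^{-1+\varepsilon})$.

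Two small cleanups. First, no factor $\frac12$ should remain in your unfolded identity: it is cancelled by the $\pm\gamma$ double counting, since $\Gamma_\infty$ as defined omits $-I$. Second, with the twist $e^{-2\pi i r\Re(\gamma z)}$ the condition picked out is exactly $n-m=qr$, so no sign adjustment is needed.
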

			\begin{proof}By the unfolding trick, we have
				\begin{equation}\label{Unfolding-trick}
					\int_{Y_{0}(q)}P_{y\psi ,r}(z)y^{k}|f(z)|^{2}\frac{\dd x\dd y}{y^2}
					= \sum_{d | q}\int_{0}^{\infty}dy\psi(dy)\int_{0}^{1}e^{-2\pi i rd x}y^{k}|f(x+iy)|^2 \dd x \frac{\dd y}{y^2}.
				\end{equation}
				The contribution of $d = 1$ is bounded by $\int_{1}^{\infty}\psi(y)\int_{0}^{1}y^{k}|f(x+iy)|^2 \dd x \frac{\dd y}{y} = O_{k, \varepsilon}(q^{-1+\varepsilon})$. The remainder is
				\begin{equation}
					\begin{aligned}
						q&\int_{0}^{\infty}\psi(qy)\int_{0}^{1}e^{-2\pi i rq x}y^{k}|f(x+iy)|^2 \dd x \frac{\dd y}{y}\\
						& =q|a_{f}(1)|^2\sum_{n\geq 1}\sum_{m \geq 1}\lambda_f(n)\lambda_f(m)(nm)^{\frac{k-1}{2}}\int_{0}^{\infty}\psi(qy)y^{k}e^{-2\pi(n+m)y}\delta\left(n-m = qr\right)\frac{\dd y}{y}\\
						& = q|a_{f}(1)|^2\sum_{n\geq 1}\sum_{m \geq 1}\lambda_f(n)\lambda_f(m)(nm)^{\frac{k-1}{2}}\int_{0}^{\infty}\psi(y)\left(\frac{y}{q}\right)^{k}e^{-\frac{2\pi(n+m)y}{q}}\delta\left(n-m = qr\right)\frac{\dd y}{y}.
					\end{aligned}
				\end{equation} 
				Recalling \eqref{Shifted-convolution-sum-f-Psi-r}, this is exactly $\mathcal{I}(f,\Psi ,r)$.
			\end{proof}
			\begin{remark}In the weight aspect, we have the similar transformation
				\[
				\int_0^{\infty}\psi(y) y^k |f(iy)|^2  \frac{dy}{y} \approx \sum_{|r| \ll k^{1/2}}\int_{Y_{0}(1)}P_{y\psi ,r}(z)y^{k}|f(z)|^{2}\frac{\dd x\dd y}{y^2}. 
				\]
				Note that in this case the shift $r$ is too large to bound each term trivially. We need to study the cancellation on average over $r$. Therefore, the problem is reduced to an average bound of shifted convolution sums (see also  \cite[Corollary 3.1 and Equ.(3.8)]{B-K-Y-fourth-norm}), which makes it more difficult than QUE in the weight aspect.

			\end{remark}

			\subsection{Estimates of $\mathcal{I}(f,\Psi ,r)$ for small $r$: Holowinsky's approach}For each normalized holomorphic newform $f$, we define
			\begin{equation}\label{M-f}
				M_{f}(x) = \frac{\prod\limits_{p\leq x}\left(1+\frac{2|\lambda_f(p)|}{p}\right)}{(\log ex)^{2}L(1,\ad f)}.
			\end{equation}
			Then Holowinsky's approach gives the following proposition.
			\begin{proposition}\label{Holowinsky's approach}For any $\varepsilon > 0$ and $ 0< |r| \leq (\log q)^{\varepsilon}$, we have
				\[
				\mathcal{I}(f,\Psi ,0) = 	\frac{3}{\pi}\int_{1}^{\infty}\psi(y)\frac{\dd y}{y} + O_{\psi,k}\left((\log q)^{\varepsilon}M_{f}(q)^{1/2}\right)
				\]
				and
				\[
				\mathcal{I}(f,\Psi ,r) = O_{\psi,k}\left( (\log qr)^{\varepsilon}M_{f}(qr) \right).
				\]
			\end{proposition}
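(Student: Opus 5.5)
By Lemma \ref{Lemma-incomplete-Poincare-series}, it suffices to estimate the inner products $\langle P_{y\psi,r}, y^k|f|^2\rangle_q$, up to an admissible error $O(q^{-1+\varepsilon})$. Equivalently, we may work directly with the shifted convolution sums
\[
\mathcal{I}(f,\Psi,r)=\frac{1}{L(1,\ad f)}\sum_{n\geq1}\frac{\lambda_f(n)\lambda_f(n+qr)\left(\frac{2\sqrt{n(n+qr)}}{2n+qr}\right)^{k-1}}{2n+qr}V(2n+qr,q).
\]
The plan is to follow Holowinsky's sieve argument, as adapted to the level aspect by Nelson. Throughout, $k$ and $\psi$ are fixed, so all dependence on the archimedean data goes into the implied constants.

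\textbf{The case $r\neq0$.} Since $V(t,q)\ll(1+t/q)^{-A}$, the sum is effectively supported on $n\ll q(\log q)^{\varepsilon}$. There the weight is $\asymp 1/(qr)$ when $n\le qr$, and $\asymp 1/n$ for larger $n$. I take absolute values and use $|\lambda_f(n)|\le d(n)$ multiplicativity only through the following consequence of Holowinsky's shifted-sum sieve bound: for $h\neq0$ and $x$ large,
\[
\sum_{n\le x}|\lambda_f(n)\lambda_f(n+h)|\ll x\,(\log x)^{\varepsilon}\prod_{p\le x}\Bigl(1+\frac{2|\lambda_f(p)|}{p}\Bigr)(\log ex)^{-2}\prod_{p\mid h}\Bigl(1+\frac1p\Bigr),
\]
applied with $h=qr$. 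This estimate is uniform in $h\le x^{2}$. It is proved by restricting to $n$ free of prime factors in $(x^{1/s}, x]$ up to a small exceptional set, then applying a Brun/Selberg upper-bound sieve to pairs $(n,n+h)$, which produces the multiplicative weight $\prod(1+2|\lambda_f(p)|/p)$.

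I would then insert this bound into the sum by dyadic partial summation over $n$ up to $x\asymp qr(\log q)^{\varepsilon}$. The factor $\prod_{p\mid qr}(1+1/p)\ll\log\log(qr)$ is absorbed into $(\log qr)^{\varepsilon}$. Dividing by $L(1,\ad f)$ gives exactly $(\log qr)^{\varepsilon}M_f(qr)$.

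\textbf{The case $r=0$.} I would not use the sieve here. Instead, spectrally expand the incomplete Eisenstein series:
\[
E_{y\psi}=\frac{3}{\pi}\int_1^\infty\psi(y)\frac{\dd y}{y}+\frac{1}{4\pi}\int_{\mathbb R}\widetilde{(y\psi)}(-\tfrac12-it)\,E(z,\tfrac12+it)\,\dd t.
\]
This is obtained by Mellin inversion $E_h=\frac1{2\pi i}\int_{(2)}\hat h(s)E(z,s)\,\dd s$ and shifting to $\Re s=1/2$, picking up the residue at $s=1$. The $\widetilde{(y\psi)}$ term is the Mellin transform of $y\psi(y)$, and it decays rapidly in $t$.

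The constant term, paired with the normalized $f$ (of mass $1$), gives the main term $\frac3\pi\int\psi\,\frac{\dd y}{y}$. Note that $\int_1^\infty y\psi(y)\,\frac{\dd y}{y^2}=\int\psi\,\frac{\dd y}{y}$ and $\mathrm{vol}(Y_0(1))=\pi/3$, which is why the coefficient is $3/\pi$.

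For the continuous spectrum I unfold $\langle E(\cdot,\tfrac12+it),y^k|f|^2\rangle_q$, summing over the cusps $\infty$ and $0$ of $\Gamma_0(q)$. This yields an expression in $L(\tfrac12+it,f\times f)/L(1,\ad f)$, times gamma factors and a factor $q^{-1/2}$-type local normalisation, which becomes $\zeta(\tfrac12+it)L(\tfrac12+it,\ad f)$ up to the local factor at $q$. I then bound $L(\tfrac12+it,\ad f)$ by Soundararajan's weak subconvexity:
\[
L(\tfrac12+it,\ad f)\ll \frac{C^{1/4}}{(\log C)^{1-\varepsilon}},\qquad C\asymp q^2(1+|t|)^{3}.
\]
Nelson's version, as in Holowinsky--Soundararajan, actually gives the sharper bound $C^{1/4}(\log C)^{\varepsilon}M_f(C)^{1/2}$, which is what I would use. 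Against the $q^{-1/2}$ normalisation, this yields $O((\log q)^{\varepsilon}M_f(q)^{1/2})$ after integrating over $t$ with the rapidly decaying weight.

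\textbf{Main obstacle.} The hard step is importing the weak subconvexity bound in the exact form $L(\tfrac12,\ad f\otimes\chi_t)/(q^{1/2}L(1,\ad f))\ll(\log q)^{\varepsilon}M_f(q)^{1/2}$. This requires tracking the local factors at $q$ and the precise normalisation of $|a_f(1)|^2$. I would cite Soundararajan and Holowinsky--Soundararajan (level version in Nelson's work) as given, rather than reprove it.

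A secondary point is checking that Holowinsky's sieve bound is uniform in the shift $h=qr$, which is comparable to $x$ in size. This uniformity is standard provided $h\le x^{O(1)}$, which holds since $r\le(\log q)^{\varepsilon}$.
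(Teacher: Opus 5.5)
Your treatment of $r\neq0$ matches the paper's proof: you take absolute values, apply Holowinsky's shifted-sum sieve bound (Nelson's form) with shift $qr$, and use partial summation. The case $r=0$, however, has a genuine gap.

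The paper obtains the first estimate by citing Nelson's Theorem 3.1, which is Holowinsky's method for incomplete Eisenstein series. Your route through the spectral expansion of $E_{y\psi}$ would need
\[
\frac{|L(\tfrac12+it,\ad f)|}{q^{1/2}L(1,\ad f)}\ll(\log q)^{\varepsilon}M_f(q)^{1/2}
\]
for bounded $t$. Soundararajan's method does not give this, and neither Holowinsky--Soundararajan nor Nelson prove it. Weak subconvexity yields only Lemma \ref{Weak-subconvexity}, hence an error $O\bigl(L(1,\ad f)^{-1}(\log q)^{-1+\varepsilon}\bigr)$. That is exactly the first estimate of Proposition \ref{Soundararajan's approach}, a different statement.

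The two bounds are incomparable. Heuristically, suppose $|\lambda_f(p)|\approx\eta$ for most $p\le q$. Then $L(1,\ad f)\approx(\log q)^{\eta^2-1}$, so $M_f(q)^{1/2}\approx(\log q)^{-(1-\eta)^2/2}$. The subconvexity error, by contrast, is about $(\log q)^{-\eta^2}$. For small $\eta$ your argument therefore falls short of the target by roughly $(\log q)^{1/2}$. This complementarity is precisely why the paper needs both propositions and optimizes over the size of $L(1,\ad f)$ at the end. Separately, when you converted your claimed bound you dropped the factor $L(1,\ad f)^{-1}$ coming from $|a_f(1)|^2$.

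The square root in $M_f(q)^{1/2}$ does not come from any $L$-function bound; it comes from Holowinsky's auxiliary-series trick. Roughly, you pair $y^k|f|^2$ with $E_{y\psi}$ multiplied by an auxiliary incomplete Eisenstein series concentrated at height $\asymp1/Y$.
\begin{itemize}
\item Spectrally expanding the auxiliary series shows the pairing equals $cY\langle E_{y\psi},y^k|f|^2\rangle_q+O(Y^{1/2})$.
\item Unfolding it instead gives a diagonal term that reproduces the main term, plus shifted sums with $|\ell|\ll Y^{1+\varepsilon}$. These are controlled by the same sieve bound you used for $r\neq0$.
\item Choosing $Y\approx M_f(q)^{-1}$ then gives the claim.
\end{itemize}
You need either to carry this out in the level aspect or to cite Nelson's Theorem 3.1 as the paper does. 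Weak subconvexity cannot substitute for it.
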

			\begin{proof} The first part is Theorem 3.1 in \cite{Nelson-QUE-level}. For $ 0< |r| \leq (\log q)^{\varepsilon}$, we obtain the estimate from \eqref{Shifted-convolution-sum-f-Psi-r}
				\begin{equation}\label{Bounds-I-f-psi-small-r}
					\begin{aligned}
						\mathcal{I}(f,\Psi ,r)
						& \ll_{\psi,k}\frac{1}{L(1,\ad f)}\left(\sum_{n \leq \frac{qr}{2}}\frac{|\lambda_f(n)\lambda_f(n +qr)|}{qr} r^{-A} + \sum_{n \geq \frac{qr}{2}}\frac{|\lambda_f(n)\lambda_f(n +qr)|}{n} \left( \frac{n}{q}\right)^{-A}\right).
					\end{aligned}
				\end{equation}
				We have the following result in Nelson's work.
				\begin{lemma}[{\cite[Theorem 3.10]{Nelson-QUE-level}} ]Let $\varepsilon \in (0,1)$. Then for $x\geq 1$ and $\ell \in \mathbb{Z}_{\neq 0}$, we have
					\[
					\sum_{\substack{n\in\mathbb{N}\\ m = n+\ell\in \mathbb{N}\\ \max(n,m)\leq x}}|\lambda_f(n)\lambda_{f}(m)| \ll_{\varepsilon}\frac{x\prod\limits_{p\leq x}\left(1+\frac{2|\lambda_f(p)|}{p}\right)}{(\log ex)^{2-\varepsilon}},
					\]
					where all implied constants are absolute.
				\end{lemma}
				

				By partial summation, we get
				\[
				\mathcal{I}(f,\Psi ,r) \ll_{\psi,k}  (\log qr)^{\varepsilon}M_{f}(qr).
				\]
			\end{proof}
			\begin{remark}In fact, we have $M_{f}(q) \ll (\log q)^{\varepsilon}$. This alone is not sufficient to reach our goal.
				
			\end{remark}
			\subsection{Estimates of $\mathcal{I}(f,\Psi ,r)$ for small $r$: Soundararajan's approach}
			We need the weak subconvexity bound of $L$-functions introduced by Soundararajan.
			\begin{lemma}\label{Weak-subconvexity}Let $f$ be a holomorphic Hecke newform of weight $k$ fixed and square-free level $q$. Let $\phi$ be a Hecke--Maass cusp form of spectral parameter $t_\phi$ and level $1$.
				For any $\varepsilon >0$, we have
				\[
				L(\frac{1}{2}, \ad f \times \phi) \ll_{k,\varepsilon} \frac{t_\phi^{\frac{3}{2}+\varepsilon}q}{(\log q)^{1-\varepsilon}}.
				\]
				Moreover, we have
				\[
				L(\frac{1}{2}+it, \ad f ) \ll_{k,\varepsilon} \frac{t^{\frac{3}{4}}q^{\frac{1}{2}}}{(\log q)^{1-\varepsilon}}.
				\]
			\end{lemma}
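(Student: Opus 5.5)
Both bounds will come from Soundararajan's weak subconvexity theorem, applied to $L$-functions with an Euler product. We check its hypotheses, namely the Ramanujan-type bound on average and the Euler product structure, and then track the conductor.

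\emph{Recalling the criterion.} Soundararajan's theorem (``Weak subconvexity for central values of $L$-functions'', Ann.\ of Math.\ 2010) concerns an $L$-function $L(s,\pi)=\sum a(n)n^{-s}$ of degree $m$. Suppose $\Lambda(n)a$-coefficients satisfy the weak Ramanujan hypothesis
\[
\sum_{x<n\le ex}\frac{|\lambda_\pi(n)|^2}{n}\ll 1,
\]
where $\lambda_\pi$ denotes the coefficients of $-L'/L$ divided by $\Lambda(n)$. Then
\[
L(\tfrac12,\pi)\ll \frac{C(\pi)^{1/4}}{(\log C(\pi))^{1-\varepsilon}},
\]
where $C(\pi)$ is the analytic conductor.

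\emph{Applying it to $\ad f\times\phi$.} By Gelbart--Jacquet, $\ad f$ is a cuspidal automorphic representation of $\GL_3$; it is cuspidal since $f$ is holomorphic and non-CM when $q$ is square-free and $k$ is fixed. Hence $\ad f\times\phi$ is a Rankin--Selberg $L$-function on $\GL_3\times\GL_2$ of degree $6$. Its Euler product has local parameters $\alpha_j\beta_i$ at unramified primes, with $|\alpha_j|=1$ by Deligne. The weak Ramanujan hypothesis holds because the Rankin--Selberg $L$-functions $L(s,\ad f\times\ad f)$ and $L(s,\phi\times\phi)$ exist and give the required mean square bound. This is exactly the setting in which Holowinsky--Soundararajan applied the theorem.

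\emph{Computing the conductor.} For $q$ square-free, $\ad f$ has local conductor $p^2$ at each $p\mid q$, since $f$ is Steinberg there. So its arithmetic conductor is $q^2$. The archimedean parameters of $\ad f$ are fixed in terms of $k$, those of $\phi$ are $\pm it_\phi$, and $\phi$ has level $1$. Thus
\[
C(\ad f\times\phi)\asymp_k q^4\,t_\phi^{6}.
\]
The criterion then gives
\[
L(\tfrac12,\ad f\times\phi)\ll q\,t_\phi^{3/2}(\log q)^{-1+\varepsilon}.
\]
Here the $t_\phi^{\varepsilon}$ loss and the uniformity come from Soundararajan's explicit dependence: the saving is in $\log C$, and $\log C\ge \log q$ when $t_\phi\ge1$.

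\emph{The second bound.} For $L(\tfrac12+it,\ad f)$, view it as $L(\tfrac12,\ad f\otimes|\cdot|^{it})$, a degree-$3$ $L$-function. Its analytic conductor is $\asymp_k q^2(1+|t|)^3$, so $C^{1/4}=q^{1/2}t^{3/4}$, and the same theorem gives the stated bound.

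\emph{Main obstacle.} The hardest step is checking uniformity of the implied constants in $\phi$ and $t$, which Soundararajan's theorem provides, together with the weak Ramanujan input for $\ad f\times\phi$. For the latter I would invoke Holowinsky--Soundararajan's verification (Lemma~2 there) verbatim, noting that it extends to square-free level.
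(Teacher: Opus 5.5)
Your route is the paper's: it proves this lemma by citing Soundararajan's weak subconvexity theorem together with Wattanawanichkul's Theorem 1.3. Your second bound is fine. For $\ad f\otimes|\cdot|^{it}$ the coefficients of $-L'/L$ on prime powers are bounded by Deligne, so the weak Ramanujan constant is absolute. Then $C\asymp_k q^2(1+|t|)^3$ gives the claim uniformly in $t$. Your conductor computation $C(\ad f\times\phi)\asymp_k q^4t_\phi^6$ is also correct.

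The gap is in the first bound, in the part you defer: uniformity in $\phi$. Soundararajan's implied constant depends on $m$, $\varepsilon$ and the constant $A$ in the weak Ramanujan hypothesis, which your ``$\ll 1$'' hides. For $\ad f\times\phi$, after Deligne for $\ad f$, that hypothesis amounts to bounding $\sum_{x<n\le ex}\Lambda(n)|\lambda_\phi(n)|^2/n$ for \emph{every} $x\ge1$. Rankin--Selberg theory for $\phi\times\phi$ controls these sums only when $x$ is large in terms of $t_\phi$. For $x$ small compared with $t_\phi$ one essentially has only pointwise bounds such as Kim--Sarnak's $|\lambda_\phi(p)|\le p^{7/64}+p^{-7/64}$. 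So $A=A(\phi)$, and your argument proves only $L(\frac12,\ad f\times\phi)\ll_{\phi,k,\varepsilon}q(\log q)^{-1+\varepsilon}$ for each fixed $\phi$. That is the Holowinsky--Soundararajan statement, where $\phi$ is fixed. Also, the Lemma 2 of theirs cited in this paper is the short Euler product bound for $L(1,\ad f)^{-1}$, not a weak Ramanujan verification.

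Your explanation of the $t_\phi^{\varepsilon}$ points to the same problem. If the theorem applied with a $\phi$-independent constant, there would be no loss at all, since $C^{1/4}=q\,t_\phi^{3/2}$ and $\log C\ge\log q$. The $t_\phi^{\varepsilon}$ is the price of making Soundararajan's argument effective in $\phi$.

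This uniformity is essential for the paper, because the lemma is fed into the spectral expansion in the proof of the Soundararajan-type proposition. It is used both for the tail $t_j\ge(\log q)^{4\varepsilon}$ and for the maximum over $t_j\le(\log q)^{4\varepsilon}$. A bound with an unspecified $\phi$-dependent constant is useless there. The paper gets the explicit $t_\phi^{3/2+\varepsilon}$ dependence from Wattanawanichkul's Theorem 1.3. Without that citation, you would have to rerun Soundararajan's proof while tracking how the bound depends on $A$ and on the eigenvalues $\lambda_\phi(p)$ at small primes.
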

			\begin{proof}See \cite[Theorem 1]{Sound-weak-subconvexity} and \cite[Theorem 1.3]{wattanawanichkul2024effectivecorrelationdecorrelationnewforms}.
			\end{proof}
			Let $\Lambda(s,\pi)$ be the complete $L$-function of $\pi$ (see \cite[Equ. (5.4)]{IK-ANT}). To relate the triple product formulas to the central value of $L$-functions, we need the following version of the Watson--Ichino formula.
			\begin{lemma}[{\cite[Theorem 4.1]{Nelson-QUE-level}}]\label{Watson-formula}
				Let $\phi$ be a Hecke--Maass cusp form of level $1$, and $f$ a holomorphic Hecke newform of square-free level $q$. Then 
				\[
				\frac{| \int_{Y_{0}(q)}\phi(z)y^{k}|f(z)|^{2}\frac{\dd x\dd y}{y^{2}}|^2}{ \int_{Y_{0}(1)}|\phi(z)|^{2}\frac{\dd x\dd y}{y^{2}} (\int_{Y_{0}(q)}y^{k}|f(z)|^{2}\frac{\dd x\dd y}{y^{2}})^{2} } = \frac{1}{8q}\frac{\Lambda(1/2, \phi)\Lambda(1/2, \ad f \times \phi)}{\Lambda(1,\ad \phi)\Lambda(1,\ad f)^2}.
				\]
			\end{lemma}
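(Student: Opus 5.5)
The plan is to derive the identity from Ichino's triple product formula, specialized to the pair $(\pi_f,\pi_f,\pi_\phi)$, and then evaluate every local factor explicitly. Let $\pi_f$ and $\pi_\phi$ be the cuspidal automorphic representations of $\mathrm{PGL}_2(\mathbb{A}_{\mathbb{Q}})$ generated by $f$ and $\phi$. The vectors are factorizable: $\phi$ is spherical at every finite place and of weight $0$ at $\infty$, while $y^{k/2}f$ is a local newvector at each $p\mid q$ and a lowest weight vector of weight $k$ at $\infty$.

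\textbf{Step 1: Ichino's formula.} With Tamagawa measures, Ichino's formula gives
\[
\frac{|\int \phi\,|f|^2|^2}{\langle\phi,\phi\rangle\langle f,f\rangle^2}=\frac{C}{8}\cdot\frac{L(1/2,\pi_f\times\pi_f\times\pi_\phi)}{L(1,\ad \pi_f)^2L(1,\ad\pi_\phi)}\prod_v I_v^{\natural}.
\]
Here $C$ is an explicit constant coming from the measures, and $I_v^\natural$ is the normalized integral of local matrix coefficients. This is first written with finite-part $L$-functions.

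Since $\pi_f$ is self-dual with trivial central character, $\pi_f\times\pi_f=1\boxplus\ad\pi_f$. Hence
\[
L(s,\pi_f\times\pi_f\times\pi_\phi)=L(s,\phi)\,L(s,\ad f\times\phi),
\]
which produces exactly the $L$-functions in the statement.

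\textbf{Step 2: unramified places.} For $p\nmid q$ all data are unramified, so $I_p^\natural=1$ by Ichino's unramified computation.

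\textbf{Step 3: the place $p=q$.} Here $\pi_{f,q}$ is an unramified quadratic twist of Steinberg and $\pi_{\phi,q}$ is unramified. I would compute $I_q$ directly from the explicit matrix coefficient of the newvector, which is supported on the Iwahori subgroup and its translates. The Iwahori decomposition reduces this to a finite sum (cf.\ Woodbury and Nelson). The expected outcome is that $I_q^\natural$ equals $q^{-1}$ times a ratio of local $L$-factors at $q$. This ratio is exactly what converts the finite-part $L$-functions into full products once the local factors of $\ad f$ at $q$ are included. In addition, the index $[\SL_2(\mathbb{Z}):\Gamma_0(q)]=q+1$ enters through the change from the Tamagawa measure to $\dd x\dd y/y^2$ on $Y_0(q)$.

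\textbf{Step 4: the archimedean place.} Here $\pi_{f,\infty}$ is the discrete series of weight $k$ and $\pi_{\phi,\infty}$ is a principal series with parameter $t_\phi$. Watson's computation gives $I_\infty^\natural$ as a ratio of Gamma functions. Equivalently, one can unfold via the Rankin--Selberg integral of $\phi$ against $y^k|f|^2$. This ratio should exactly complete the finite $L$-values to $\Lambda(1/2,\phi)\Lambda(1/2,\ad f\times\phi)/(\Lambda(1,\ad\phi)\Lambda(1,\ad f)^2)$.

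\textbf{Step 5: assembly.} Multiplying the contributions of Steps 2--4 and tracking the global constant $C$ together with the classical-versus-adelic normalizations of norms should yield the constant $\frac{1}{8q}$. I expect this bookkeeping, together with the local integral at $q$, to be the main obstacle. The most delicate part is correctly matching the Steinberg local $L$-factors of $\ad f$ with those of the triple product.
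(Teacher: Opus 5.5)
The paper gives no argument for this lemma beyond citing Nelson's Theorem~4.1, which rests on the Watson--Ichino triple product formula. Your outline is the standard derivation of that result: Ichino's formula for $(\pi_f,\tilde\pi_f,\pi_\phi)$, the factorization $L(s,\pi_f\times\pi_f\times\pi_\phi)=L(s,\phi)L(s,\ad f\times\phi)$, Ichino's unramified lemma, a local computation at $q$, and Watson's archimedean computation. So the strategy is right. But as written it is a plan rather than a proof. The constant $\frac{1}{8q}$, which is the actual content of the statement, is left as ``expected'', and your one concrete claim about it is wrong.

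The left-hand side does not depend on the index $[\SL_2(\mathbb{Z}):\Gamma_0(q)]$. The $Y_0(q)$-measure enters squared in the numerator and squared in $(\int_{Y_0(q)}y^k|f|^2)^2$, while $\phi$ is normalized on $Y_0(1)$. Hence the left side is exactly $\frac{3}{\pi}$ times the same ratio for probability measures, i.e.\ $\frac{6}{\pi}$ times the ratio for Tamagawa measure (total mass $2$). So the index $q+1$ cancels, and the whole factor $q^{-1}$ must come from the local integral at $q$. It does: in Ichino's normalization with $\operatorname{vol}(\mathrm{PGL}_2(\mathbb{Z}_q))=1$ one gets $I'_q=q^{-1}$ exactly. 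Equivalently, the unnormalized integral is $q^{-1}$ times the full local factor $\zeta_q(2)^2L_q(\tfrac12,\Pi_q)/\prod_iL_q(1,\ad\pi_i)$. If you also inserted a factor $(q+1)^{\pm1}$ from the measure comparison, as your Step~3 proposes, you would get the wrong power of $q$.

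For the local computation, note that the newvector matrix coefficient $\Phi$ is not supported on the Iwahori subgroup. It is bi-$K_0(q)$-invariant and nonzero on every double coset. Since the Iwahori--Hecke generators $T_s$ act by $-1$ on the Steinberg newvector, $|\Phi|^2=q^{-2\ell(x)}$ on $K_0(q)xK_0(q)$. This gives $\int_K|\Phi|^2=q^{-1}$. For $n\geq 1$, the double coset $K\operatorname{diag}(q^n,1)K$ consists of four Iwahori double cosets of lengths $n-1,n,n,n+1$, so $\int_{K\operatorname{diag}(q^n,1)K}|\Phi|^2=(q+1)q^{-n-1}$. Pairing with Macdonald's formula for the spherical function of $\pi_{\phi,q}$, with Satake parameter $\alpha$, yields
\[
I_q=\frac1q\Bigl(1-\frac1q\Bigr)\frac{(1+\alpha q^{-1/2})(1+\alpha^{-1}q^{-1/2})}{(1-\alpha q^{-3/2})(1-\alpha^{-1}q^{-3/2})}.
\]
Since $L_q(s,\ad f)=\zeta_q(s+1)$ and $L_q(s,\ad f\times\phi)=L_q(s+1,\phi)$, normalizing by $\zeta_q(2)^2L_q(\tfrac12,\Pi_q)/\prod_iL_q(1,\ad\pi_i)$ leaves exactly $q^{-1}$. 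What your Step~5 still has to check is that $\frac{6}{\pi}$, Ichino's constant $C/2^3$, the $\zeta(2)$-factors and $I'_\infty$ combine to $\frac18$.

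Finally, for cuspidal $\phi$ you cannot literally unfold $\int\phi\,y^k|f|^2$. The Rankin--Selberg computation determines $I_\infty$ only indirectly, via an Eisenstein series with the same archimedean parameter.
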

			We have the following proposition by using weak subconvexity.
			\begin{proposition}\label{Soundararajan's approach}For any $\varepsilon > 0$ and $ 0< |r| \leq (\log q)^{\varepsilon}$, we have
				\[
				\mathcal{I}(f,\Psi ,0) = 	\frac{3}{\pi}\int_{1}^{\infty}\psi(y)\frac{\dd y}{y} + O_{\psi,k}\left(\frac{1}{L(1,\ad f)(\log q)^{1-\varepsilon}}\right)
				\]
				and
				\[
				\mathcal{I}(f,\Psi ,r) = O_{\psi, k}\left( \frac{1}{L(1,\ad f)(\log q)^{1/2-\varepsilon}} \right).
				\]
			\end{proposition}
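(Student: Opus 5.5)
By Lemma \ref{Lemma-incomplete-Poincare-series}, it suffices to treat the inner products $\langle P_{y\psi,r}, y^k|f|^2\rangle_q$ and absorb the $O(q^{-1+\varepsilon})$ error. The plan is to spectrally expand the incomplete Poincar\'e series on $Y_0(1)$ and apply Lemma \ref{Watson-formula} together with Lemma \ref{Weak-subconvexity}, exactly as in Soundararajan's treatment of QUE and its level-aspect version in \cite{Nelson-QUE-level}.

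For $r=0$ the function $h(y)=y\psi(y)$ is smooth and compactly supported in $(1,\infty)$, so $E_{h}$ has the expansion
\[
E_h(z)=\frac{3}{\pi}\langle E_h,1\rangle+\sum_{\phi}\frac{\langle E_h,\phi\rangle}{\langle\phi,\phi\rangle}\phi(z)+\frac{1}{4\pi}\int_{\mathbb{R}}\langle E_h,E(\cdot,\tfrac12+it)\rangle E(z,\tfrac12+it)\,\dd t.
\]
Unfolding gives $\langle E_h,1\rangle=\int_0^\infty y\psi(y)\frac{\dd y}{y^2}=\int_1^\infty\psi(y)\frac{\dd y}{y}$, which produces the main term $\frac{3}{\pi}\int_1^\infty\psi(y)\frac{\dd y}{y}$. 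By unfolding, $\langle E_h,\phi\rangle$ is the Mellin transform of $h$ at $\frac12+it_\phi$ times $\rho_\phi(1)$, so it decays faster than any power of $t_\phi$, up to the factor $\Lambda(1/2,\phi)^{1/2}$-type normalizations.

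For each Maass cusp form $\phi$, Lemma \ref{Watson-formula} combined with the weak subconvexity bound $L(\frac12,\ad f\times\phi)\ll t_\phi^{3/2+\varepsilon}q(\log q)^{-1+\varepsilon}$ gives
\[
|\langle\phi,y^k|f|^2\rangle_q|^2\ll \frac{t_\phi^{A}}{L(1,\ad f)^2(\log q)^{1-\varepsilon}}.
\]
The factor $1/q$ in the Watson formula cancels the $q$ in the subconvexity bound, and $\Lambda(1,\ad f)$ contributes $L(1,\ad f)^2$. Hence each cusp-form term is $\ll t_\phi^{A'}L(1,\ad f)^{-1}(\log q)^{-1/2+\varepsilon}$.

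The Eisenstein contribution is handled the same way. After unfolding, $\langle E(\cdot,\frac12+it),y^k|f|^2\rangle$ equals a ratio of the form $\zeta(\frac12+it)L(\frac12+it,\ad f)/(\zeta(1+2it)L(1,\ad f)\,q^{1/2})$ times gamma factors, up to local factors at $q$. The second bound of Lemma \ref{Weak-subconvexity} then gives $\ll |t|^{B}L(1,\ad f)^{-1}(\log q)^{-1+\varepsilon}$, with no square root lost since no square is involved. Summing over $\phi$ and integrating over $t$ against the rapid decay of the spectral coefficients of $E_h$ (via Weyl's law) shows the cuspidal part is $O(L(1,\ad f)^{-1}(\log q)^{-1/2+\varepsilon})$.

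This is weaker than the stated $(\log q)^{-1+\varepsilon}$ for $r=0$. I expect the sharper claim requires a different normalization: the square of the inner product appears on the left of the Watson formula, so I will recheck whether Nelson's Theorem 4.1 gives $\Lambda(1/2,\phi)\Lambda(\frac12,\ad f\times\phi)/q$ and whether the relevant weak subconvexity saving is $(\log q)^{-2+\varepsilon}$ after squaring. The cleanest route is to cite \cite[Theorem 3.2/Prop.]{Nelson-QUE-level} for $r=0$ directly, as was done in Proposition \ref{Holowinsky's approach}.

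For $0<|r|\le(\log q)^\varepsilon$, the function $P_{y\psi,r}$ has no constant-term projection, since $\langle P_{h,r},1\rangle=0$ by unfolding against $e(-rx)$. Its spectral coefficients are $\overline{\rho_\phi(r)}$ times a Mellin-type integral of $h$, and in the Eisenstein case the divisor-sum coefficient at $r$. These are $\ll |r|^{1/2+\varepsilon}t^{-A}$, so the same argument gives $O(L(1,\ad f)^{-1}(\log q)^{-1/2+\varepsilon})$ with the polynomial $r$-dependence absorbed into $(\log q)^{\varepsilon}$.

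The main obstacle is controlling the spectral sum uniformly. This requires explicit polynomial dependence on $t_\phi$ in both the weak subconvexity bound and the gamma factors in Watson's formula, so that the Mellin decay of $h$ dominates.
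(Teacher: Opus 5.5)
\textbf{There is a genuine gap in the $r=0$ case.} For $0<|r|\le(\log q)^{\varepsilon}$ your argument is the paper's: spectrally expand $P_{y\psi,r}$, use the rapid decay of $\langle P_{y\psi,r},\phi_j\rangle$ to truncate at $t_j\le(\log q)^{4\varepsilon}$, then apply Watson--Ichino with weak subconvexity, losing a square root.

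For $r=0$ the bound $(\log q)^{-1+\varepsilon}$ is left unproved. The obstruction you hit comes from a concrete error, not a normalization problem. You claim $\langle E_h,\phi\rangle$ is $\rho_\phi(1)$ times a Mellin transform of $h$. In fact unfolding gives
\[
\langle E_h,\phi\rangle=\int_0^\infty h(y)\Big(\int_0^1\overline{\phi(x+iy)}\,\dd x\Big)\frac{\dd y}{y^2}=0,
\]
because a cusp form has vanishing constant term. You are conflating this with $\langle P_{h,r},\phi\rangle$ for $r\neq0$, which picks out the $r$-th Fourier coefficient.

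Hence the incomplete Eisenstein series has no cuspidal component. Only the constant term survives, giving the main term $\frac{3}{\pi}\langle E_h,1\rangle$ as you computed, together with the continuous spectrum. The continuous part involves $L(\frac12+it,\ad f)$ linearly, with the factor $q^{-1/2}L(1,\ad f)^{-1}$. The second bound of Lemma \ref{Weak-subconvexity} then gives $O\big(L(1,\ad f)^{-1}(\log q)^{-1+\varepsilon}\big)$ directly, with no square-root loss.

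The repairs you float do not work. No $(\log q)^{-2+\varepsilon}$ saving is available, and Watson's formula is correctly normalized. The square-root loss for genuine cusp-form periods is real, which is exactly why the $r\neq0$ bound is only $(\log q)^{-1/2+\varepsilon}$. Deferring to a citation of Nelson simply leaves the step unproved.

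For comparison, the paper avoids spectral theory at $r=0$ altogether. Since $r=0$ is the diagonal $n=m$, $\mathcal{I}(f,\Psi,0)$ equals $L(1,\ad f)^{-1}$ times a Mellin integral of
\[
\left(\frac{4\pi}{q}\right)^{-s}\frac{L(1+s,\ad f)\,\zeta^{(q)}(1+s)}{\zeta^{(q)}(2+2s)}\cdot\frac{\Gamma(s+k)}{\Gamma(k)}\,\widetilde{\psi}(-s).
\]
Shifting to $\Re s=-\frac12$, the pole at $s=0$ gives $\frac{3}{\pi}\int_1^\infty\psi(y)\frac{\dd y}{y}\,(1+q^{-1})^{-1}$. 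The shifted integral is
\[
\ll q^{-1/2}L(1,\ad f)^{-1}\int_{\mathbb{R}} e^{-|t|}\,\big|L(\tfrac12+it,\ad f)\big|\,\dd t .
\]
This is the same quantity as your continuous-spectrum term once the spurious cuspidal contribution is removed.
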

			\begin{proof}
				By definition, we have
				\[
				\mathcal{I}(f,\Psi ,0) = \frac{2\pi^2}{L(1,\ad f)}\frac{1}{8\pi^2 i}\int_{(3)}(\frac{4\pi}{q})^{-s}\frac{L(1+s,\ad f)\zeta^{(q)}(1+s)}{\zeta^{(q)}(2+2s)}\frac{\Gamma(s+k)}{\Gamma(k)}\int_{1}^{\infty}\psi(y)y^{-s}\frac{\dd y}{y} \dd s.
				\]
				Here $\zeta^{(q)}(s) := \zeta(s)\left(1 - \frac{1}{q^s}\right)$. After shifting the contour to the line $\Re(s) = -\frac{1}{2}$, the residue is
				\[
				\frac{3}{\pi}\int_{1}^{\infty}\psi(y)\frac{\dd y}{y}\times \frac{1}{1+q^{-1}}.
				\]
				The integration on the line $\Re(s) = -\frac{1}{2}$ is bounded by
				\[
				\frac{1}{L(1,\ad f)}\int_{\mathbb{R}}(\frac{4\pi}{q})^{1/2}\big|\frac{L(1/2+it,\ad f)\zeta^{(q)}(1/2+it)}{\zeta^{(q)}(1+2it)}\frac{\Gamma(k-1/2+it)}{\Gamma(k)}\int_{1}^{\infty}\psi(y)y^{1/2-it}\frac{\dd y}{y}\big| \dd t.
				\]
				Then 
				\begin{equation}\label{I-f-psi-0-main-term}
					\begin{aligned}
						\mathcal{I}(f,\Psi ,0) &= \frac{3}{\pi}\int_{1}^{\infty}\psi(y)\frac{\dd y}{y} + O_{\psi,k}\left(\frac{\int_{-\infty}^{+\infty}e^{-|t|}|L(\frac{1}{2}+it,\ad f)|\dd t}{q^{1/2}L(1,\ad f)}\right)\\
						& = \frac{3}{\pi}\int_{1}^{\infty}\psi(y)\frac{\dd y}{y} + O_{\psi,k,\varepsilon}\left(\frac{1}{L(1,\ad f)(\log q)^{1-\varepsilon}}\right).
					\end{aligned}
				\end{equation}
				For $ 0< |r| \leq (\log q)^{\varepsilon}$, by the spectral decomposition, we have
				\begin{multline}\label{Spectral-decomposition}
					\int_{Y_{0}(q)}P_{y\psi ,r}(z)y^{k}|f(z)|^{2}\frac{\dd x\dd y}{y^2} = \sum_{j\geq 1}\langle P_{y\psi , r} , \phi_j\rangle_{1}	\int_{Y_{0}(q)}\phi_j(z)y^{k}|f(z)|^{2}\frac{\dd x\dd y}{y^2}\\ + \frac{1}{4\pi}\int_{\mathbb{R}}\langle P_{y\psi , r} , E_{t}\rangle_{1}	\int_{Y_{0}(q)}E_{t}(z)y^{k}|f(z)|^{2}\frac{\dd x\dd y}{y^2}\dd t.
				\end{multline}
				Let $\rho_j(1)$ be the first Fourier coefficient of $\phi_j$ and $|\rho_j(1)|^2 = \frac{\cosh \pi t_{j}}{2L(1,\ad \phi_j)}$. The Hecke eigenvalue of $\phi_j$ satisfies $\lambda_j(r) \ll r^{1/2}$ from Hecke's bound. By the unfolding trick and Mellin inversion, we have
				\begin{equation}
					\begin{aligned}
						\langle P_{y\psi , r} , \phi_j\rangle_{1} &= 2\rho_j(1)\lambda_j(r)\int_{0}^{\infty}\psi(y)y^{1/2}K_{it_j}(2\pi|r| y)\dd y\\
						& = 2\rho_j(1)\lambda_j(r) \frac{1}{2\pi i}\int_{(\sigma)}2^{s-2}\Gamma(\frac{s+it_j}{2})\Gamma(\frac{s-it_j}{2})\widetilde{\psi}(-s-1/2) |r|^{-s} \dd s\\
						& \ll_{\psi} r^{1/2}(\frac{1+|t_j|}{r})^{-A}.
					\end{aligned}
				\end{equation}
				To obtain the final estimate, we shift the contour to the line $\Re(s) = -A$ and use Stirling's formula. The contribution at the poles of $\Gamma\left(\frac{s+it_j}{2}\right)$ can be controlled by $\widetilde{\psi}(-\sigma+it_j)\ll t_{j}^{-A}$. Similarly, we get $\langle P_{y\psi , r} , E_{t}\rangle_{1} \ll r^{1/2}(\frac{1+|t|}{r})^{-A}.$
				Thus, we have
				\[
				\sum_{t_j\geq (\log q)^{4\varepsilon}}\langle P_{y\psi , r} , \phi_j\rangle_{1}	\int_{Y_{0}(q)}\phi_j(z)y^{k}|f(z)|^{2}\frac{\dd x\dd y}{y^2} \ll_{\psi,k,\varepsilon} (\log q)^{-2026} 
				\]
				from the Watson--Ichino formula and the weak subconvexity bound. Similarly, we have
				\[
				\frac{1}{4\pi}\int_{|t| \geq (\log q)^{4\varepsilon}}\langle P_{y\psi , r} , E_{t}\rangle_{1}	\int_{Y_{0}(q)}E_{t}(z)y^{k}|f(z)|^{2}\frac{\dd x\dd y}{y^2}\dd t \ll_{\psi,k,\varepsilon} (\log q)^{-2026}. 
				\]
				In conclusion, we obtain
				\begin{multline*}
					\int_{Y_{0}(q)}P_{y\psi ,r}(z)y^{k}|f(z)|^{2}\frac{\dd x\dd y}{y^2} \\
					\ll_{\psi, k} (\log q)^{\varepsilon} \big(\max_{\substack{t_j \leq (\log q)^{4\varepsilon} \\ t \leq (\log q)^{4\varepsilon}}}\frac{L(\frac{1}{2}, \ad f \times \phi_j)^{1/2}}{q^{1/2}L(1,\ad f)} + \frac{L(\frac{1}{2}+it, \ad f )}{q^{1/2}L(1,\ad f)}\big) 
					\ll \frac{1}{L(1,\ad f)(\log q)^{1/2-\varepsilon}}.
				\end{multline*}
				
			\end{proof}
			\subsection{End of the proof of Theorem \ref{Theorem-psi-all-geodesic}: optimal process}
			From Proposition \ref{Holowinsky's approach} and Proposition \ref{Soundararajan's approach}, we have
			\begin{multline}
				\mathcal{I}(f,\Psi) - \frac{3}{\pi}\int_{1}^{\infty}\psi(y)\frac{\dd y}{y}  \ll_{\psi, k, \varepsilon} 	 	 (\log q)^{\varepsilon}\min\big\{M_{f}(q(\log q)^{\varepsilon})^{1/2},  \frac{1}{L(1,\ad f)(\log q)^{1/2-\varepsilon}}\big\}
			\end{multline}
			where $M_{f}(x)$ is defined by \eqref{M-f}.
			\begin{lemma}We have
				\[
				M_{f}(q(\log q)^{\varepsilon}) \ll (\log q)^{\varepsilon}
				\]
				and
				\[
				M_{f}(q(\log q)^{\varepsilon}) \ll (\log q)^{1/6+\varepsilon}L(1,\ad f)^{1/2}.
				\]
			\end{lemma}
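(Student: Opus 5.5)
We write $x = q(\log q)^{\varepsilon}$, so that $\log x = \log q + O(\log\log q)$ and $(\log ex)^2 \asymp (\log q)^2$. Both bounds follow from a mean-value analysis of the Euler product
\[
P(x) := \prod_{p\le x}\Bigl(1+\frac{2|\lambda_f(p)|}{p}\Bigr),
\]
together with the standard relation between $L(1,\ad f)$ and $\exp\bigl(\sum_{p\le x}(\lambda_f(p)^2-1)/p\bigr)$. This is Holowinsky--Soundararajan's treatment in the level aspect, as in \cite[\S 3]{Nelson-QUE-level} and in \cite{wattanawanichkul2024effectivecorrelationdecorrelationnewforms}.

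\textbf{Step 1: a prime-sum formula for $L(1,\ad f)$.} We first prove
\[
\log L(1,\ad f) = \sum_{p\le x}\frac{\lambda_f(p)^2-1}{p} + O(\varepsilon\log\log q + 1).
\]
Deligne's bound gives $0\le \lambda_f(p)^2\le 4$, and $\lambda_f(p^2)=\lambda_f(p)^2-1$ is the $p$-th coefficient of $L(s,\ad f)$; the ramified prime $p=q$ contributes $O(1/q)$. The approximation itself comes from the standard zero-free region argument, or equivalently the log-free-type estimate for $\log L(1,\ad f)$ by short prime sums. This is available since $L(s,\ad f)$ is automorphic with no Siegel zero \cite{Gelbart-Jacquet,Hoffstein-Lockhart}, and it has conductor $\asymp q^2$. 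Summing primes up to $x$ rather than $\log q$-powers costs only $O(1)$, by Mertens together with zero-free regions. If that full approximation is not available, we fall back on the one-sided bound
\[
\log L(1,\ad f) \ge \sum_{p\le x}\frac{\lambda_f(p)^2-1}{p} - O(\log\log\log q),
\]
which is what is actually needed and gives the exponents up to $\varepsilon$.

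\textbf{Step 2: reduce to an elementary inequality.} Since $\log P(x) = \sum_{p\le x}2|\lambda_f(p)|/p + O(1)$ and $\sum_{p\le x}1/p = \log\log q + O(1)$, Step 1 gives
\[
\log M_f(x) \le \sum_{p\le x}\frac{2|\lambda_f(p)| - \lambda_f(p)^2 - 1}{p} + O(\varepsilon\log\log q + 1).
\]
For the first bound we use $2|a|-a^2-1=-(|a|-1)^2\le 0$, which gives $M_f(x)\ll(\log q)^{\varepsilon}$.

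For the second bound we need
\[
\log P(x) - 2\log\log x - \log L(1,\ad f) \le \Bigl(\tfrac16+\varepsilon\Bigr)\log\log q + \tfrac12\log L(1,\ad f),
\]
that is,
\[
\sum_{p\le x}\frac{2|\lambda_f(p)| - 2 - \tfrac32(\lambda_f(p)^2-1)}{p} \le \Bigl(\tfrac16+\varepsilon\Bigr)\log\log q.
\]
Writing $a=|\lambda_f(p)|\in[0,2]$, the function $g(a)=2a-\tfrac32a^2-\tfrac12$ is maximized at $a=2/3$, where $g(2/3)=\tfrac23-\tfrac12=\tfrac16$. Hence $g(a)\le \tfrac16$ pointwise. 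Summing $\tfrac16\cdot\frac1p$ over $p\le x$ yields $\tfrac16\log\log q + O(1)$, which proves the claim. (Equivalently, the first bound is the same optimization with coefficient $1$ in place of $\tfrac32$.)

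\textbf{Main obstacle.} The delicate point is Step 1: controlling $\log L(1,\ad f)$ by the prime sum truncated at $x\approx q$ uniformly in $f$, with error $O(\varepsilon\log\log q)$ and no exceptional zeros. I would cite the lower-bound form used by Holowinsky--Soundararajan, namely
\[
L(1,\ad f) \gg (\log q)^{-\varepsilon}\exp\Bigl(\sum_{p\le x}\frac{\lambda_f(p)^2-1}{p}\Bigr),
\]
from \cite[Lemma 2]{Holowinsky-Sound-QUE-holo}, adapted to level $q$ as in \cite{Nelson-QUE-level}. Only this inequality direction is used in Step 2, since $L(1,\ad f)$ enters $\log M_f$ with a negative sign and the right side of the second bound with a positive sign.
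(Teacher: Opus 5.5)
Your proposal is correct and is essentially the paper's argument: the one-sided Holowinsky--Soundararajan lower bound for $L(1,\ad f)$ (the only input the paper uses), followed by $-(|a|-1)^2\le 0$ for the first bound and $2|a|\le \frac23+\frac32a^2$, i.e.\ your $g(a)\le\frac16$, for the second. The two-sided $O(1)$ approximation you state first in Step 1 is stronger than what the zero-free region argument you sketch gives and is not needed, so drop it in favor of the one-sided fallback you already identify.
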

			\begin{proof}Using the bound (see \cite[Lemma 2]{Holowinsky-Sound-QUE-holo}, \cite[Lemma 7.2]{wattanawanichkul2024effectivecorrelationdecorrelationnewforms})
				\[
				L(1,\ad f)^{-1} \ll_{k,\varepsilon} (\log\log q)^{9}\prod\limits_{ p \leq kq}\left(1 - \frac{\lambda_{f}(p)^2 - 1}{p}\right)
				\]
				and $\prod\limits_{p\leq x}\left(1+\frac{a}{p}\right) \asymp (\log x)^{a}$. We get
				\[
				M_{f}(q(\log q)^{\varepsilon}) \ll_{k, \varepsilon} (\log q)^{\varepsilon}\prod\limits_{p\leq q(\log q)^{\varepsilon}}\left(1 - \frac{(|\lambda_{f}(p)|-1)^2}{p}\right)\ll_{k, \varepsilon}(\log q)^{\varepsilon}.
				\]
				For the second one, we have
				\begin{equation}
					\begin{aligned}
						\prod\limits_{p\leq q(\log q)^{\varepsilon}}\left(1+\frac{2|\lambda_f(p)|}{p}\right) &\leq \prod\limits_{p\leq q(\log q)^{\varepsilon}}\left(1+ \frac{2}{3p} + \frac{3}{2} \frac{\lambda_f(p)^2}{p}\right)\\
						& \ll \prod\limits_{p\leq q(\log q)^{\varepsilon}}\left(1+ \frac{13}{6p}\right)\left(1 + \frac{3}{2}\frac{\lambda_f(p)^2 - 1}{p}\right)\\
						& \ll (\log q)^{13/6+\varepsilon}L(1, \ad f)^{3/2}.
					\end{aligned}
				\end{equation}
				Here we may use $\prod\limits_{kq\leq p\leq q(\log q)^{\varepsilon}}\left(1 + \frac{a}{p}\right) \ll_{k, \varepsilon} \log\log q$. This completes the proof.
			\end{proof}
			If $L(1 ,\ad f) \leq (\log q)^{\delta}$ and $\delta \geq -1$, the error term is bounded by
			\[
			\max\limits_{\delta \geq -1}\min\big\{(\log q)^{1/12+\delta/4} , (\log q)^{-1/2-\delta}\big\} = (\log q)^{-1/30}.
			\]
			The maximum is attained at $\delta=-\frac{7}{15}$. In conclusion, this proves Theorem \ref{Theorem-psi-all-geodesic}.
			\section{\label{sec:4}The case of square-free levels}
			Essentially, we only need to explain how the key Lemma \ref{Lemma-incomplete-Poincare-series} holds. We may study the periods with incomplete Poincar\'e series as we do in \S \ref{sec:3}.
			
			For $q = p_1p_2\cdots p_r$ an odd square-free number, we choose the special coset representatives
			\begin{equation}\label{coset-representatives-square-free-q}
				\Gamma_0(q)\backslash\Gamma(1) = \big\{\begin{pmatrix}
					1 &0\\
					0 &1
				\end{pmatrix}\big\} \cup  \big\{ \begin{pmatrix}
					0 &-1\\
					1 &-\ell
				\end{pmatrix}, 0\leq|\ell|\leq \frac{q-1}{2}\big\}\cup_{\substack{d \mid q\\ 1 < d < q }} Q_{d}
			\end{equation}
			where 
			\begin{equation}
				Q_{d} =  \big\{ \begin{pmatrix}
					0 &-1\\
					1 &-d
				\end{pmatrix}\begin{pmatrix}
					0 &-1\\
					1 &-\ell_d
				\end{pmatrix}, 0\leq|\ell_d|\leq \frac{q/d-1}{2}\big\}.
			\end{equation}
			Note that $Q_d$ and the second set correspond to the cusps $\frac{1}{d}$ and $0$, respectively. The Atkin--Lehner operators for $\Gamma_0(q)$ can be chosen from the set
			\begin{equation}
				\big\{ \begin{pmatrix}
					0 &-1\\
					q & 0
				\end{pmatrix}\big\}\cup\big\{ \begin{pmatrix}
					1 &  0\\
					-q   &  q/d
				\end{pmatrix}, d | q, 1<d<q \big\}.
			\end{equation}
			We define 
			\[
			\omega_{d} =\begin{pmatrix}
				1 &  0\\
				-q   &  q/d
			\end{pmatrix}
			\]
			for $d | q, 1<d<q$. In fact, the actual Atkin--Lehner operator 
			\[
			\omega_{d}' = \begin{pmatrix}
				xq/d  &  y\\
				zq   &  kq/d 
			\end{pmatrix} = \gamma\omega_{d}
			\]
			where $\gamma \in \Gamma_0(q)$. Then for $f \in H_{k}^{*}(q)$, we get
			\[
			f_{{|}_k \omega_{d}} = f_{{|}_k \omega_{d}'} = \pm f
			\]
			where the action of slash operator is $f_{{|}_k A}(z) = (\det A)^{k/2}(cz+d)^{-k}f(z)$ for $A =  \begin{pmatrix}
				a  &  b\\
				c   &  d 
			\end{pmatrix}  \in \GL(2,\mathbb{R})^{+}$.

			By the parameterization of $\mathcal{L},\mathcal{L}_d$ as in \S \ref{sec:2}, we get
			\begin{equation}
				\begin{aligned}
					\mathcal{I}(f,\Psi) &= \int_{1}^{\infty}\psi(y)y^{k}f(iy)^2\frac{\dd y}{y}\\
					& + \sum_{0 \leq |\ell| \leq \frac{q-1}{2}}\int_{1}^{\infty}\psi\left(y\right)\left(\frac{y}{\ell^2 + y^2}\right)^{k}\big|f\left(\frac{\ell}{\ell^2 +y^2}+ i\frac{y}{\ell^2 + y^2}\right)\big|^2 \frac{\dd y}{y}\\
					& + \sum_{\substack{d |q\\1<d<q}}\sum_{0 \leq |\ell_d| \leq \frac{q/d-1}{2}}\int_{1}^{\infty}\psi\left(y\right)\left(\Im (\gamma_d \cdot iy)\right)^{k}\big|f\left( \gamma_d \cdot iy\right)\big|^2 \frac{\dd y}{y},
				\end{aligned}
			\end{equation}
			where $ \gamma_d = \begin{pmatrix}
				0 &-1\\
				1 &-d
			\end{pmatrix}\begin{pmatrix}
				0 &-1\\
				1 &-\ell_d
			\end{pmatrix}$. By Atkin--Lehner theory, we have
			\[
			\det(\omega_{d})^{k/2}(-qz+q/d)^{-k}f(\omega_d\cdot z) = \pm f(z).
			\]
			Taking $z = \gamma_d \cdot iy$ gives
			\[
			\big|f\left( \gamma_d \cdot iy\right)\big|^2 = \left(\frac{q}{d}\right)^{-k}|dz - 1|^{-2k} \big|f\left(\frac{-\ell_d + iy}{q/d}\right)\big|^2.
			\]
			From $\gamma_d \cdot iy = \frac{\ell_d(d\ell_d - 1)+dy^2 + iy}{(d\ell_d-1)^2 + (dy)^2}$ and $|dz - 1|^{2} = \frac{1}{(d\ell_d-1)^2 + (dy)^2}$, we get
			\begin{equation}
				\left(\Im (\gamma_d \cdot iy)\right)^{k}\big|f\left( \gamma_d \cdot iy\right)\big|^2 = \left(\frac{y}{q/d}\right)^k\big|f\left(\frac{-\ell_d + iy}{q/d}\right)\big|^2.
			\end{equation}
			We also have 
			\begin{equation}
				\left(\frac{y}{\ell^2 + y^2}\right)^{k}\big|f\left(\frac{\ell}{\ell^2 +y^2}+ i\frac{y}{\ell^2 + y^2}\right)\big|^2 = \left(\frac{y}{q}\right)^k\big|f\left(\frac{-\ell + iy}{q}\right)\big|^2.
			\end{equation}
			
			Hence, we establish \eqref{Vertical-lines}. After applying the Fourier expansion and truncating the large shifts, the key Lemma \ref{Lemma-incomplete-Poincare-series} still holds from the expression \eqref{Unfolding-trick}.
			
			If $2 | q$, for $d$ with $2\nmid d$, we replace $Q_d$ by the following set:
			\begin{equation}
				P_{d} =  \big\{ \begin{pmatrix}
					0 &-1\\
					1 &-d
				\end{pmatrix}\begin{pmatrix}
					0 &-1\\
					1 &-\ell_d
				\end{pmatrix}, 0 < |\ell_d|\leq \frac{q/d}{2}\big\}.
			\end{equation}

			\section{\label{sec:5}The proof of Theorem \ref{Theorem-1-all-geodesic}}
			Let $q$ be prime.	We deal with $\mathcal{I}(f,1)$ as \eqref{Fourier-expansion-I-f-Psi} and get
			\begin{equation}
				\begin{aligned}
					\mathcal{I}&(f,1) = \frac{2\pi^2}{L(1,\ad f)q}\frac{1}{4\pi^2 i} \times\\ &\int_{(3)}(2\pi)^{-s}\sum_{t \geq 2}\frac{\sum_{n+m = t}\lambda_{f}(n)\lambda_{f}(m)\left(\frac{2\sqrt{nm}}{n+m}\right)^{k-1}\left(\frac{1}{s} + \frac{q^{s}}{s}\delta(n\equiv m \bmod q)\right)}{t^{1+s}}\frac{\Gamma(s+k)}{\Gamma(k)} \dd s.
				\end{aligned}
			\end{equation}
			The contribution of the term containing $\frac{1}{s}$ is bounded by $O_{k,\varepsilon}(q^{-1+\varepsilon})$ from Deligne's bound. The diagonal term over $n = m$ is
			\[
			\frac{2\pi^2}{L(1,\ad f)}\frac{1}{8\pi^2 i}\int_{(3)}(\frac{4\pi}{q})^{-s}\frac{L(1+s,\ad f)\zeta^{(q)}(1+s)}{\zeta^{(q)}(2+2s)}\frac{\Gamma(s+k)}{\Gamma(k)} \frac{\dd s}{s}.
			\]
			By Laurent's expansion, we have
			\begin{equation}
				\begin{aligned}
					\left(\frac{4\pi}{q}\right)^{-s} &= 1 - s\log \frac{4\pi}{q} +O(s^2),\\
					L(1+s, \ad f) &= L(1,\ad f) + sL'(1,\ad f) +O(s^2),\\
					\frac{1}{\zeta^{(q)}(2+2s)} &= \frac{1}{\zeta^{(q)}(2)} - 2s \frac{\zeta^{(q)'}(2)}{\left(\zeta^{(q)}(2)\right)^2} + O(s^2),\\
					\frac{\Gamma(s+k)}{\Gamma(k)}& = 1 + s\frac{\Gamma'(k)}{\Gamma(k)} +O(s^2),\\			
					\frac{\zeta(1+s)}{s}&= \frac{1}{s^2} + \frac{\gamma}{s}+O(1).
				\end{aligned}
			\end{equation}
			After shifting the contour to the line $\Re(s) = -\frac{1}{2}$, the residue is 
			\begin{multline*}
				\frac{\gamma L(1,\ad f)}{\zeta^{(q)}(2)} +
				\left(\frac{(- \log \frac{4\pi}{q})L(1,\ad f)}{\zeta^{(q)}(2)} + \frac{L'(1,\ad f)}{\zeta^{(q)}(2)} - \frac{2\zeta^{(q)'}(2)L(1,\ad f)}{\left(\zeta^{(q)}(2)\right)^2} + \frac{L(1,\ad f)}{\zeta^{(q)}(2)}\frac{\Gamma'(k)}{\Gamma(k)}\right).
			\end{multline*}
			Note that $\zeta^{(q)}(2) =\zeta(2)(1-q^{-2}) = \frac{\pi^2}{6}(1-q^{-2})$ and $\frac{\zeta^{(q)'}(2)}{\zeta^{(q)}(2)} = \frac{\zeta'(2)}{\zeta(2)} + \frac{q^{-2}\log q}{1-q^{-2}}$. The contribution of the residue is
			\[
			\frac{3}{\pi}\log q + \frac{3}{\pi}\frac{L'(1,\ad f)}{L(1,\ad f)} + \frac{3}{\pi}\left(  \gamma -\log4\pi -2 \frac{\zeta'(2)}{\zeta(2)} +  \frac{\Gamma'(k)}{\Gamma(k)}   \right) + O(q^{-1}).
			\]
			The integral on the line $\Re(s) = -\frac{1}{2}$ is bounded by
			\begin{equation}
				O_{k}\left(\frac{\int_{-\infty}^{+\infty}e^{-|t|}|L(\frac{1}{2}+it,\ad f)|\frac{\dd t}{1+|t|}}{q^{1/2}L(1,\ad f)}\right) = O_{k,\varepsilon}\left((\log q)^{\varepsilon}\right).
			\end{equation}
			The remainder terms with $n \neq m$ are bounded by $M_{f}(q(\log q)^{\varepsilon}) = O_{k, \varepsilon}\left((\log q)^{\varepsilon}\right)$. This completes the proof of Theorem \ref{Theorem-1-all-geodesic}.

			\section{\label{sec:6}The dual geodesics and relations to twisted $L$-functions}
			In this section, we give the full details of the proof of Corollary \ref{Corollary-twisted}. We use the same coset representatives of $\Gamma_0(q)\backslash\Gamma(1)$ as \eqref{coset-representatives}. Let $\mathcal{L}_\ell = \begin{pmatrix}
				0 &-1\\
				1 &-\ell
			\end{pmatrix}\mathcal{L}$. We define
			\[
			\mathcal{I}'(f, 1) =  \int_{	 \mathcal{L}}y^{k}|f(z)|^2\dd s(z),\quad\quad	 \mathcal{I}_{\ell}(f, 1) =  \int_{	 \mathcal{L}_{\ell}}y^{k}|f(z)|^2\dd s(z) 
			\]
			for $0\leq|\ell|\leq \frac{q-1}{2}$. Therefore, we have $\mathcal{I}(f,1) =  \mathcal{I}'(f, 1) + \sum_{0\leq|\ell|\leq \frac{q-1}{2}}\mathcal{I}_{\ell}(f, 1) $.
			We now introduce the notion about the dual geodesics.
			\begin{definition}Let $\mathcal{L}_\ell = \begin{pmatrix}
					0 &-1\\
					1 &-\ell
				\end{pmatrix}\mathcal{L}$. For $0<|\ell|\leq \frac{q-1}{2}$, we get the unique integer $0<|\overline{\ell}|\leq \frac{q-1}{2}$ such that $\ell \overline{\ell} \equiv 1 \mod q$. We call  $\mathcal{L}_{\overline{\ell}}$ the dual  geodesic of $\mathcal{L}_{\ell}$. In particular, $\mathcal{L}$ is defined to be the dual of $\mathcal{L}_0$.
			\end{definition}
			By the same method in Section \ref{sec:2}, we have
			\begin{equation}\label{I-ell}
				\mathcal{I}_{\ell}(f,1) = \int_{1}^{\infty}\left(\frac{y}{q}\right)^{k}\big|f\left(\frac{-\ell + iy}{q}\right)\big|^2 \frac{\dd y}{y} = \int_{1/q}^{\infty}y^{k}\big|f\left(\frac{-\ell}{q} + iy\right)\big|^2 \frac{\dd y}{y}
			\end{equation}
			for $0\leq |\ell| \leq \frac{q-1}{2}$.
			For $0<|\ell|\leq \frac{q-1}{2}$, the cusp $-\frac{\ell}{q}$ is equivalent to $\infty$. We choose $\gamma = \begin{pmatrix}
				-\ell & b\\
				q   &  d
			\end{pmatrix} \in \Gamma_0(q)$ such that $-\ell d \equiv 1 \bmod q$ and $\gamma (\infty) = -\frac{\ell}{q}$. We get
			\[
			\int_{0}^{1/q}y^{k}\big|f\left(\frac{-\ell}{q} + iy\right)\big|^2 \frac{\dd y}{y} = \int_{1/q}^{\infty}y^{k}\big|f\left(\frac{d}{q} + iy\right)\big|^2 \frac{\dd y}{y} = \int_{1/q}^{\infty}y^{k}\big|f\left(\frac{-\overline{\ell}}{q} + iy\right)\big|^2 \frac{\dd y}{y}
			\]
			from $y^{k}|f(z)|^2 = (\Im\gamma^{-1}z)^{k}|f(\gamma^{-1}z)|^2$.
			Consequently,
			\[
			\mathcal{I}_{\ell}(f,1) + \mathcal{I}_{\overline{\ell}}(f,1) =  \int_{0}^{\infty}y^{k}\big|f\left(\frac{-\ell}{q} + iy\right)\big|^2 \frac{\dd y}{y}.
			\]
			This explains why we call these two geodesic branches dual.	By Parseval's identity, we get
			\begin{equation}\label{Parseval}
				\int_{0}^{\infty}y^{k}\big|f\left(\frac{-\ell}{q} + iy\right)\big|^2 \frac{\dd y}{y} = \frac{|a_{f}(1)|^2}{(2\pi)^{k+1}}\int_{-\infty}^{+\infty}|\Gamma(\frac{k}{2}+it)|^2 |L(\frac{1}{2}+it, -\frac{\ell}{q}, f)|^2 \dd t
			\end{equation}
			where $L(s , -\frac{\ell}{q} ,f)$ is the additive twisted $L$-function. Then we get
			\begin{equation}
				\begin{aligned}
					2\mathcal{I}(f,1) 
					& = 2\int_{0}^{\infty}y^{k}|f(iy)|^2 \frac{\dd y}{y} +\sum_{0 < |\ell| \leq \frac{q-1}{2}}\int_{0}^{\infty}y^{k}|f(\frac{\ell}{q}+iy)|^2 \frac{\dd y}{y}\\
					& = \sum_{0\leq|\ell|\leq \frac{q-1}{2}}\frac{|a_{f}(1)|^2}{(2\pi)^{k+1}}\int_{-\infty}^{+\infty}|\Gamma(\frac{k}{2}+it)|^2 |L(\frac{1}{2}+it, -\frac{\ell}{q}, f)|^2 \dd t.
				\end{aligned}
			\end{equation}
			Thus we have established \eqref{I-f-1-moments-L-function} now.
			\begin{remark}Similarly, one obtains
				\begin{equation}\label{Psi-integration-L-function}
					2\mathcal{I}(f,\Psi) = 2\int_{0}^{\infty}\psi(y)y^{k}|f(iy)|^2 \frac{\dd y}{y} +\sum_{0 < |\ell| \leq \frac{q-1}{2}}\int_{0}^{\infty}\psi(qy)y^{k}|f(\frac{\ell}{q}+iy)|^2 \frac{\dd y}{y}.
				\end{equation} 
			\end{remark}
			\subsection{Relations between additive twists and multiplicative twists}
			We recall the basic relations between additive and multiplicative characters. Let the Gauss sum be
			\[
			\tau(\chi) := \sum_{a\bmod m}\chi(a)e(\frac{a}{m})
			\]
			for a character $\chi \bmod m$. We have
			\begin{equation}\label{Character-transforms}
				\begin{aligned}
					e(\frac{a}{m}) &= \frac{1}{\varphi(m)}\sum_{\chi \bmod m}\overline{\chi}(a)\tau(\chi),\quad\quad \gcd(a,m) = 1,\\
					\chi(a)\tau(\overline{\chi}) &= \sum_{b\bmod m}\overline{\chi}(b)e(\frac{ab}{m}).
				\end{aligned}
			\end{equation}
			The second identity holds for primitive characters $\chi$ modulo $m$. For $q$ prime and $0<|\ell|\leq \frac{q-1}{2}$, we define the Dirichlet series
			\begin{equation}\label{Additive-twisted-L-function}
				L(s,\frac{\ell}{q},f) :=\sum_{n\geq1} \frac{\lambda_{f}(n)e(\frac{\ell n}{q})}{n^s},\quad\quad \Re(s)>1.
			\end{equation}
			For $\Re(s)$ sufficiently large, we have
			\[
			L(s,\frac{\ell}{q},f) =\sum_{\substack{n\geq1\\ \gcd(n,q)=1}} \frac{\lambda_{f}(n)e(\frac{\ell n}{q})}{n^s} + \sum_{\substack{n\geq1}} \frac{\lambda_{f}(qn)}{(qn)^s}.
			\]
			By \eqref{Character-transforms}, we get
			\[
			\sum_{\substack{n\geq1\\ \gcd(n,q)=1}} \frac{\lambda_{f}(n)e(\frac{\ell n}{q})}{n^s} = \frac{1}{\varphi(q)}\sum_{\chi \bmod q}\chi(\ell)\tau(\overline{\chi})\sum_{n \geq 1}\frac{\lambda_{f}(n)\chi(n)}{n^{s}} = \frac{1}{\varphi(q)}\sum_{\chi \bmod q}\chi(\ell)\tau(\overline{\chi})L(s,\chi,f)
			\]
			where the multiplicative twisted $L$-function $L(s,\chi,f)$ is defined by \eqref{Multiplicative-twisted}. 
			\begin{remark}See \cite[Appendices A ]{K-M-V-Rankin-Selberg-level-aspect} and the theory of standard $L$-functions $L(s,f)$. Both $ L(s,\frac{\ell}{q},f)$ and $\sum\limits_{\substack{n\geq1}} \frac{\lambda_{f}(qn)}{(qn)^s} = \frac{\lambda_f(q)}{q^{s}}L(s,f)$ admit analytic continuation to entire functions in $\mathbb{C}$. Then $\sum\limits_{\substack{n\geq1\\ \gcd(n,q)=1}} \frac{\lambda_{f}(n)e(\frac{\ell n}{q})}{n^s}$ have the analytic continuation as well. By an inversion formula, the same holds for $L(s,\chi,f)$.
			\end{remark}
			Now we get
			\begin{multline}
				\sum_{0<|\ell|\leq \frac{q-1}{2}}| L(s,\frac{\ell}{q},f)|^2 = \frac{1}{\varphi(q)}\sum_{\chi \bmod q}|\tau(\overline{\chi})|^2| L(s,\chi,f)|^2\\ + 2\Re \{ \overline{\tau(\chi_0)L(s,\chi_0,f)} \sum_{\substack{n\geq1}} \frac{\lambda_{f}(qn)}{(qn)^s}\} + |\sum_{\substack{n\geq1}}\frac{\lambda_{f}(qn)}{(qn)^s}|^2. 
			\end{multline}
			In particular, we have the following proposition.
			\begin{proposition}\label{Additive-multiplicative-L-function}Assume that $q$ is prime and $t \ll q^{\varepsilon}$. Let $f$ be a Hecke newform of weight $k$ and level $q$. Then
				\begin{equation}
					\sum_{0<|\ell|\leq \frac{q-1}{2}}| L(\frac{1}{2}+it,\frac{\ell}{q},f)|^2 = \frac{q}{\varphi(q)}{\sum_{\chi \bmod q}}^{*}| L(\frac{1}{2}+it,\chi,f)|^2  + O_{k,\varepsilon}(q^{-1/2+\varepsilon}).
				\end{equation}
			\end{proposition}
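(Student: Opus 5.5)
Starting point: the identity displayed just before the proposition, evaluated at $s=\frac12+it$. The plan is to show that every term except the primitive-character part is $O_{k,\varepsilon}(q^{-1/2+\varepsilon})$.

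\emph{Primitive characters.} Since $q$ is prime, every non-principal $\chi \bmod q$ is primitive and satisfies $|\tau(\overline{\chi})|^2=q$. So the first sum equals
\[
\frac{q}{\varphi(q)}{\sum_{\chi \bmod q}}^{*}|L(\tfrac12+it,\chi,f)|^2 + \frac{|\tau(\chi_0)|^2}{\varphi(q)}|L(\tfrac12+it,\chi_0,f)|^2 .
\]
For the principal character, $\tau(\chi_0)=\sum_{a \bmod q,\,(a,q)=1}e(a/q)=-1$.

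\emph{Principal character and the $\lambda_f(qn)$ term.} As $\lambda_f$ is multiplicative and $q\,\|\,$level, we have $L(s,\chi_0,f)=(1-\lambda_f(q)q^{-s})L(s,f)$. The remark before the proposition gives $\sum_{n}\lambda_f(qn)(qn)^{-s}=\lambda_f(q)q^{-s}L(s,f)$. For a newform of prime level with trivial nebentypus, $\lambda_f(q)^2=1/q$, so $|\lambda_f(q)q^{-1/2-it}|=q^{-1}$. Hence:
\begin{itemize}
\item the principal-character term is $\ll q^{-1}|L(\tfrac12+it,f)|^2$;
\item the cross term is $\ll 2\cdot q^{-1}|L(\tfrac12+it,f)|^2$;
\item the last term is $\ll q^{-2}|L(\tfrac12+it,f)|^2$.
\end{itemize}
Altogether these contribute $\ll q^{-1}|L(\tfrac12+it,f)|^2$.

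\emph{Bound for $L(\tfrac12+it,f)$.} The convexity bound, from the approximate functional equation with analytic conductor $\asymp_k q(1+|t|)^2$, gives $L(\tfrac12+it,f)\ll_{k,\varepsilon}(q(1+|t|)^2)^{1/4+\varepsilon}$. Using $t\ll q^{\varepsilon}$, this is $\ll q^{1/4+\varepsilon}$. The error is therefore $q^{-1}\cdot q^{1/2+\varepsilon}=q^{-1/2+\varepsilon}$, as claimed.

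\emph{Main obstacle.} The delicate inputs are bookkeeping rather than analysis: the correct normalisation of the principal-character and $qn$ pieces in the expansion, the identity $|\lambda_f(q)|=q^{-1/2}$, and uniform convexity in $t$. Each is standard Atkin–Lehner/Hecke theory. I would also double-check that $\ell$ running over $0<|\ell|\le\frac{q-1}{2}$ covers all nonzero residues, which the orthogonality step needs.
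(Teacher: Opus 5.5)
Your argument is correct and is the argument the paper leaves implicit (it gives no separate proof, only the identity and the remark $\sum_n\lambda_f(qn)(qn)^{-s}=\lambda_f(q)q^{-s}L(s,f)$): expand via the displayed identity, isolate the primitive characters using $|\tau(\overline{\chi})|^2=q$, and bound the principal-character, cross and $\lambda_f(qn)$ terms using $|\lambda_f(q)|=q^{-1/2}$ and convexity for $L(\tfrac12+it,f)$. One bookkeeping fix, inherited from the paper's display: since $\sum_n\lambda_f(qn)(qn)^{-s}$ does not depend on $\ell$, the last term is really $(q-1)\bigl|\sum_n\lambda_f(qn)(qn)^{-s}\bigr|^2\asymp q^{-1}|L(\tfrac12+it,f)|^2$ rather than $q^{-2}|L(\tfrac12+it,f)|^2$; this still lies within your total $\ll q^{-1}|L(\tfrac12+it,f)|^2$, so the $O_{k,\varepsilon}(q^{-1/2+\varepsilon})$ conclusion is unaffected.
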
   
			Replacing the left side of \eqref{Additive-multiplicative-moments} by the multiplicative twisted $L$-function completes the proof of Corollary \ref{Corollary-twisted}.
			
			\section*{Acknowledgements}
			
			The author would like to thank Prof. Bingrong Huang for his encouragement and helpful discussions. He also gratefully thanks the referees for
			their constructive
			comments.

			\newcommand{\etalchar}[1]{$^{#1}$}

			
			
			

		\end{document}